\newtheorem{thm}{Theorem}
\newtheorem{cor}[thm]{Corollary}
\newtheorem{lemma}[thm]{Lemma}
\theoremstyle{definition}
\theoremstyle{remark}
\newtheorem{remark}[thm]{Remark}
\newtheorem{example}[thm]{Example}
\def\mathcs{C^{*}}
\newcommand{\cs}{\ensuremath{\mathcs}}
\DeclareMathSymbol{\rtimes}{\mathbin}{AMSb}{"6F}
\newcommand{\ib}{im\-prim\-i\-tiv\-ity bi\-mod\-u\-le}
\newcommand{\sme}{\,\mathord{\mathop{\text{--}}\nolimits_{\relax}}\,}
\DeclareMathOperator{\Ind}{Ind}
\DeclareMathOperator{\Prim}{Prim}
\DeclareMathOperator*{\supp}{supp}
\def\set#1{\{\,#1\,\}}
\newcommand\sset[1]{\{#1\}}
\let\tensor=\otimes
\def\restr#1{|_{{#1}}}
\def\labelenumi{\textnormal{(\@alph\c@enumi)}}
\def\theenumi{\@alph \c@enumi}
\def\labelenumii{\textnormal{(\@roman\c@enumii)}}
\def\theenumii{\@roman \c@enumii}
\def\alphapart#1{\charno=96
\advance\charno by#1\char\charno}
\def\<{\langle}
\def\>{\rangle}
\let\ipscriptstyle=\scriptscriptstyle
\def\lipsqueeze{{\mskip -3.0mu}}
\def\ripsqueeze{{\mskip -3.0mu}}
\def\ipcomma{\nobreak\mathrel{,}\nobreak}
\newbox\ipstrutbox
\def\ipstrut{\copy\ipstrutbox}
\def\lip#1<#2,#3>{\mathopen{\relax_{\ipstrut\ipscriptstyle{
#1}}\lipsqueeze
\langle} #2\ipcomma #3 \rangle}
\def\blip#1<#2,#3>{\mathopen{\relax_{\ipstrut
\ipscriptstyle{ #1}}\lipsqueeze\bigl\langle} #2\ipcomma #3 \bigr\rangle}
\def\rip#1<#2,#3>{\langle #2\ipcomma #3
\rangle_{\ripsqueeze\ipstrut\ipscriptstyle{#1}}}
\def\brip#1<#2,#3>{\bigl\langle #2\ipcomma #3
\bigr\rangle_{\ripsqueeze\ipstrut\ipscriptstyle{#1}}}
\def\angsqueeze{\mskip -6mu}
\def\smangsqueeze{\mskip -3.7mu}
\def\trip#1<#2,#3>{\langle\smangsqueeze\langle #2\ipcomma #3
\rangle\smangsqueeze\rangle_{\ripsqueeze\ipstrut\ipscriptstyle{#1}}}
\def\btrip#1<#2,#3>{\bigl\langle\angsqueeze\bigl\langle #2\ipcomma
#3
\bigr\rangle
\angsqueeze\bigr\rangle_{\ripsqueeze\ipstrut\ipscriptstyle{#1}}}
\def\tlip#1<#2,#3>{\mathopen{\relax_{\ipstrut\ipscriptstyle{
#1}}\lipsqueeze \langle\smangsqueeze\langle} #2\ipcomma #3
\rangle\smangsqueeze\rangle}
\def\btlip#1<#2,#3>{\mathopen{\relax_{\ipstrut\ipscriptstyle{
#1}}\lipsqueeze
\bigl\langle\angsqueeze\bigl\langle} #2\ipcomma #3
\bigr\rangle\angsqueeze\bigr\rangle}
\def\ip(#1|#2){(#1\mid #2)}
\def\bip(#1|#2){\bigl(#1 \mid #2\bigr)}
\def\Bip(#1|#2){\Bigl( #1 \bigm| #2 \Bigr)}
\newcommand\go{G^{(0)}} 
\def\g[#1,#2]{{}_{G}[#1,#2]} 
\def\h[#1,#2]{[#1,#2]_{H}}
\newcommand\cc{C_{c}}
\newcommand\HH{\mathscr{H}}
\def\ipp(#1|#2){\ip({#1}|{#2})_{\pi}}
\newcommand\bundlefont\mathscr
\newcommand\B{\bundlefont{B}}
\newcommand\E{\bundlefont{E}}
\def\sa_#1(#2,#3){\Gamma_{#1}(#2;#3)}
\newcommand\sacgb{\sa_{c}(G,\B)}
\newcommand\half{\frac12}
\newcommand\dint{\int^{\oplus}}
\newcommand\atensor{\odot}
\renewcommand\H{\mathcal{H}}
\newcommand\mpi{M_{\pi}}
\newcommand\gugo{G\backslash \go}
\newcommand\gshr{G*\HH_{r}}
\DeclareMathOperator{\End}{End}
\def\timeofday{
\hours=\time
\minutes=\hours
\divide\hours by60
\multiply\hours by60
\advance\minutes by-\hours
\divide\hours by60
\ifnum\hours>9\else0\fi\the\hours:\ifnum\minutes>9\else
0\fi\the\minutes}
\def\predate{\date{\color{red}\bfseries \the\day\ \ifcase\month\or
  January\or February\or March\or April\or May\or June\or July\or
        August\or September\or October\or November\or
           December\fi\ \the\year\ --- \timeofday -- v010}}
\begin{document}

\author{Aidan Sims}
\address{School of Mathematics and Applied Statistics\\
University of Wollongong \\
NSW 2522\\
Australia}

\email{asims@uow.edu.au}

\author{Dana P. Williams}
\address{Department of Mathematics \\ Dartmouth College \\ Hanover, NH
03755-3551}

\email{dana.williams@Dartmouth.edu}

\subjclass[2000]{46L55}

\keywords{Groupoid; Fell bundle; amenable; reduced $C^*$-algebra}

\date{22 December 2011}

\title{Amenability for Fell bundles over groupoids}

\begin{abstract}
  We establish conditions under which the universal and reduced norms
  coincide for a Fell bundle over a groupoid. Specifically, we prove
  that the full and reduced $C^*$-algebras of any Fell bundle over a
  measurewise amenable groupoid coincide, and also that for a groupoid
  $G$ whose orbit space is $T_0$, the full and reduced algebras of a
  Fell bundle over $G$ coincide if the full and reduced algebras of
  the restriction of the bundle to each isotropy group coincide.
\end{abstract}

\maketitle
\tableofcontents

\section*{Introduction}\label{sec:introduction}

If $G$ is an amenable group, then the reduced crossed product and full
crossed product for any action of $G$ on a $C^*$-algebra
coincide. This result was proved for discrete groups by Zeller-Meyer
in \cite{zel:jmpa68} and in general by Takai in
\cite{tak:jfa75}. Since the \cs-algebra of a Fell bundle over a
groupoid $G$ is a very general sort of crossed product by $G$, it is
reasonable to expect the universal norm and reduced norm to coincide
on $\sacgb$ when $G$ is suitably amenable.  Immediately the situation
is complicated because amenability for groupoids is not as clear cut
as it is for groups.  There are three reasonable notions of
amenability for a second countable locally compact Hausdorff groupoid:
(topological) amenability, measurewise amenability and, for lack of a
better term, ``metric amenability'' by which we simply mean that the
reduced norm and universal norm on $\cc(G)$ coincide.

Amenability implies measurewise
amenability which in turn implies metric amenability. While there are
situations where the converses hold, it is
unknown if they hold in general. Our main result here,
Theorem~\ref{thm-main-amen}, is that if $G$ is measurewise amenable as
defined in \cite{anaren:amenable00}, then the reduced norm and
universal norm on $\sacgb$ coincide for any Fell bundle
  $\B$ over $G$. This result subsumes the usual result for group
dynamical systems and the result for groupoid dynamical systems; for a
discussion of this, see \cite{simwil:xx11}*{Examples 10~and 11}. The
result for groupoid systems is also asserted in
\cite{anaren:amenable00}*{Proposition~6.1.10} where they cite
\cite{ren:jot91}*{Theorem~3.6}. Since it is usually
hard to determine if a groupoid found in the
wild is amenable in any given one the three flavors mentioned above,
we also prove in Theorem~\ref{thm-smooth-amen} that groupoids which
act nicely on their unit spaces in the sense that that $\gugo$ is
$T_{0}$ and whose stability groups are all amenable are themselves
measurewise amenable. This result may be known
to experts, but seems worth advertising.
We also show that if $G$
is a groupoid whose orbit space is $T_0$ and if $\B$ is a Fell bundle
over $G$ such that the full and reduced $C^*$-algebras of the
restriction of $\B$ to each isotropy group in $G$ coincide, then the
full and reduced $C^*$-algebras of the whole bundle coincide. This is
a formally stronger result than the combination of
Theorem~\ref{thm-smooth-amen} and Theorem~\ref{thm-main-amen}: there
are many examples of Fell bundles over non-amenable groups whose full
and reduced $C^*$-algebras coincide (see, for example,
\cite{exe:jfram97}).

We start with very short sections on Fell bundles and amenable
groupoids to clarify our definitions and
point to the relevant literature.  In
Section~\ref{sec:disint-theor-revis} we point out a simple
strengthening of the disintegration theorem for Fell bundles (from
\cite{muhwil:dm08}) which is needed here.  For readability, the
details are shifted to Appendix~\ref{sec:borel--functors}.  In
Section~\ref{sec:fell-bundle-cs} we prove our main
theorem. In Section~\ref{sec:induced amenability} we show
that groupoids with $T_0$ orbit space and amenable stability groups
are measurewise amenable. In Section~\ref{sec:induced amenability} we
prove  that bundles over
groupoids with $T_0$ orbit space whose restrictions to isotropy groups
are metrically amenable are themselves metrically amenable.

Since we appeal to the disintegration theorem for Fell bundles, we
require separability for our results.  In particular, all the
groupoids and spaces that appear will be assumed to be second
countable, locally compact and Hausdorff.  Except when it is clearly
not the case, for example $B(\H)$ and other multiplier algebras, all
the algebras and Banach spaces that appear are separable. We
  also assume that our Fell bundles are always
saturated. The underlying Banach bundles are only required
to be upper semicontinuous.

\section{Fell bundles}
\label{sec:fell-bundles}

We will refer to \cite{muhwil:dm08}*{\S1} for details of the
definition of a Fell bundle $p : \B\to G$ over a groupoid as well as
of the construction of the associated \cs-algebra $\cs(G,\B)$. (The
examples in \cite{muhwil:dm08}*{\S2} would be very helpful
supplementary reading.) Roughly speaking, a Fell bundle $p : \B \to G$
is an upper-semicontinuous Banach bundle endowed with a partial
multiplication compatible with $p$ such that the fibres $A(u)$ over
units $u$ are $C^*$-algebras and such that each fibre $B(x)$ is an
$A(r(x))$--$A(s(x))$-\ib\ with respect to the inner products and
actions induced by the multiplication on $\B$.  In particular, when
$x$ and $y$ are composable, multiplication in $\B$ implements
isomorphisms $B(x) \otimes_{A(s(x))} B(y) \cong B(xy)$. The space
$\sa_{c}(G,\B)$ of continuous sections of $\B$ then carries a natural
convolution and involution. The $C^*$-algebra $\cs(G,\B)$ is the
completion of $\sa_{c}(G,\B)$ with respect to the universal norm for
representations which are continuous with respect to the
inductive-limit topology on $G$.

Regarding our notation: as above, we use a roman letter, $B(x)$, for
the fibre over $x$ together with its Banach space structure, but we
will use both $A(u)$ and $B(u)$ for the fibre over a unit $u$ so as to
distinguish its dual roles. The Fell bundle axioms imply that
$A:=\sa_{0}(\go,\B)$ is a \cs-algebra which is called the \cs-algebra
of $\B$ over $\go$; in particular it is a $C_{0}(\go)$-algebra. So for
$u \in \go$ we write $A(u)$ for the fibre over $u$ when we are
thinking of it as a \cs-algebra, and we write $B(u)$ when we are
thinking of it instead as an $A(u)\sme A(u)$-\ib. We assume that our
Fell bundles are separable, so in addition to $G$ being second
countable, we assume that the Banach space $\sa_{0}(G,\B)$ is
separable.  By axiom, our Fell bundles are saturated in that
$B(x)B(y)=B(xy)$, where  $B(x)B(y)$ denotes
  $\overline{\operatorname{span}}\set{b_x b_y : \text{$b_x \in B(x)$, $b_y \in
  B(y)$}}$. If $F$ is a locally closed subset\footnote{Recall that a
  subset of a locally compact Hausdorff space is locally compact if
  and only if it is locally closed and is locally closed if and only
  if it is the intersection of a closed set and an open set
  \cite{wil:crossed}*{Lemmas 1.25~and 1.26}.}  of $\go$, then we abuse
notation slightly and write $\sa_{c}(F,\B)$
in place of $\sa_{c}(F,\B\restr F)$ (as we have already done for
$A=\sa_{0}(\go,\B)$ above).  If we let $G(F) := G\restr F=\set{x\in
  G:\text{\(s(x)\in F\) and \(r(x)\in F\)}}$ be the restriction of $G$
to $F$, then $G(F)$ is a locally compact groupoid with Haar system
$\sset{\lambda^{u}}_{u\in F}$.  As above, we write $\cs(G(F),\B)$ in
place of
$\cs(G(F),\B\restr{G(F)})$.

Recall the definition of the reduced norm
on $\sacgb$ from \cite{simwil:xx11}. If $\pi$ is a representation of
$A=\sa_{0}(\go,\B)$, then using \cite{wil:crossed}*{Example~F.25} and
the discussion preceding \cite{muhwil:nyjm08}*{Definition~7.9}, we can
assume that there is a Borel Hilbert bumdle $\go * \HH$, a finite
Radon measure $\mu$ on $\go$ and representations $\pi_u$ of
$A$ on $\H(u)$, factoring through $A_u$, such that
\begin{equation*}
  \pi=\dint_{\go}\pi_{u}\,d\mu(u).
\end{equation*}
For $u \in \go$, we frequently regard the $\pi_{u}$ as
representations of $A(u)$. Even if
$\pi$ is nondegenerate, we can only assume that
  $\mu$-almost all of the $\pi_u$ are nondegenerate. Indeed, we could
have $\pi_u = 0$ for a null set of $u$. The formula $(\Ind\pi)(f)(g\tensor h)=(f*g)\tensor h$
  for $f,g \in \sacgb$ and $h \in L^2(\go * \HH, \mu)$ determines a
representation $\Ind\pi$ of $\sacgb$  on the
completion of $\sacgb\atensor L^{2}(\go*\HH,\mu)$ with respect to the
inner product
\begin{align}
  \ip(f\tensor h|g\tensor k)&= \bip(\pi(g^{*}*f)h|k)\notag \\
  &=\int_{\go}\int_{G}\bip(\pi_{u}\bigl(g(x^{-1})^{*}f(x^{-1})\bigr)
  h(u)|{k(u)}) \,d\lambda^{u}(x)\,d\mu(u)\notag \\
  &= \int_{\go}\int_{G}
  \bip(\pi_{u}\bigl(g(x)^{*}f(x)\bigr)h(u)|{k(u)}) \,d\lambda_{u}(x)
  \,d\mu(u).\label{eq:IndPi-ip}
\end{align}

The reduced norm on $\sacgb$ is given by
\begin{equation*}
  \|f\|_{r}:=\sup\set{\|(\Ind\pi)(f)\|:\text{$\pi$ is a representation
      of $A$}}.
\end{equation*}
Since $\ker (\Ind\pi)$ depends only on $\ker \pi$ (by, for example,
\cite{rw:morita}*{Corollary~2.73}) this definition of
$\|\cdot\|_{r}$ agrees with
 other definitions in the literature---for
example Exel's in \cite{exe:jfram97} and Moutuou and Tu's in
\cite{moutu:xx11}. So
  $\cs_{r}(G,\B)$ is the quotient of $\cs(G,\B)$ by the
kernel $I_{\cs_{r}(G,\B)}$ of $\Ind\pi$
for any faithful representation $\pi$ of the \cs-algebra
$A=\sa_{0}(\go,\B)$ of $\B$ over~$\go$.

\section{Amenable groupoids}
\label{sec:amenable-groupoids}

Let $G$ be a second-countable locally compact groupoid with Haar
system $\lambda$.  Renault \cite{ren:groupoid}*{p.~92} originally
defined $G$ to be topologically amenable, or just amenable, if there
is a net $\sset{f_{i}}\subset \cc(G)$ such
that
\begin{compactenum}
\item the functions $u\mapsto f_i * f^*_i(u)$ are uniformly bounded on
  $C_{0}(\go)$, and
\item $f_i * f^*_i \to 1$ uniformly on compact subsets of $G$.
\end{compactenum}
Later, in the extensive treatment by Anatharaman-Delaroche and
Renault, an \emph{a priori} different definition was given:
\cite{anaren:amenable00}*{Definition~2.2.8}; however,
\cite{anaren:amenable00}*{Proposition~2.2.13(iv)} and its proof show
that the two notions of amenability are equivalent.  It is not hard to
see, using standard criteria such as
\cite{wil:crossed}*{Proposition~A.17}, that a group is amenable as a
groupoid if and only if it is amenable as a group.

Let $\mu$ be a quasi-invariant measure on $\go$, and let $\nu :=
\mu\circ\lambda$ be the induced measure on $G$ (that is, $\nu(\cdot) =
\int_{\go} \lambda^u(\cdot)\,d\mu(u)$). In
\cite{anaren:amenable00}*{Definition~3.2.8}, $\mu$ is called amenable
if there exists a suitably invariant mean on $L^{\infty}(G,\nu)$. The
pair $(G,\lambda)$ is measurewise amenable if every quasi-invariant
measure $\mu$ is amenable
\cite{anaren:amenable00}*{Definition~3.2.8}. Since $L^{\infty}(G,\nu)$
depends only on the equivalence class of $\nu$, if $\mu'$ is
equivalent to $\mu$ and $\mu$ is amenable, then so is $\mu'$. Since
\cite{anaren:amenable00} considers only $\sigma$-finite measures, to
demonstrate that $(G,\lambda)$ is measurewise amenable, it suffices to
show that every $\emph{finite}$ quasi-invariant measure $\mu$ is
amenable.

It follows from \cite{anaren:amenable00}*{Theorem~2.2.17} and
\cite{anaren:amenable00}*{Theoerem~3.2.16} that amenability and
measurewise amenability, respectively, are preserved under groupoid
equivalence.  Theorem~17 of \cite{simwil:jot11} implies that
metric amenability is preserved as well.  In particular, none of the
three flavors of amenability of $G$ depend on the choice of
Haar system $\lambda$.

In this note, we will use the characterization of amenability of
$(G,\lambda,\mu)$ given in
\cite{anaren:amenable00}*{Proposition~3.2.14(v)}. If $G$ is amenable
then it is measurewise amenable by
\cite{anaren:amenable00}*{Proposition~3.3.5}. If $G$ is measurewise
amenable then it is metrically amenable by
\cite{anaren:amenable00}*{Proposition~6.1.8}.

\section{The disintegation theorem revisited}
\label{sec:disint-theor-revis}

Our main tool here is the disintegration theorem from \cite{muhwil:dm08}. Fix a nondegenerate
representation $L$ of $\cs(G, \B)$. Then \cite{muhwil:dm08}*{Theorem~4.13} implies that there are a
quasi-invariant measure $\mu$ on $\go$, a Borel Hilbert bundle $\go * \HH$, and a Borel $*$-functor
$b \mapsto \bigl(r(b),\pi(b),s(b)\bigr)$ (see \cite{muhwil:dm08}*{Definition~4.5}) from $\B$ into
$\operatorname{End}(\go*\HH)$ such that $L$ is equivalent to the integrated form of the associated
strict representation $(\mu, \go*\HH, \pi)$ of $\B$. For $h,k\in L^{2}(\go*\HH,\mu)$ and $f\in
\sacgb$, we then have
\begin{equation*}
  \bip(L(f)h|k)=\int_{G} \Bip(\pi(f(x))h(s(x))|{k(r(x)})
  \Delta(x)^{-\half}\,d \nu(x).
\end{equation*}

Regrettably, the authors of \cite{muhwil:dm08} neglected to point out
that the Borel $*$-functor associated to $L$ constructed in
\cite{muhwil:dm08}*{Theorem~4.13} is \emph{nondegenerate} in the sense
that for all $x\in G$,
\begin{equation}
  \label{eq:2}
  \overline{\pi(B(x))\H(s(x))}=\overline{\operatorname{span}}\set{\pi(b)v:
    \text{$b\in B(x)$ and $v\in \H(s(x))$}} =\H(r(x)).
\end{equation}

We outline why this is true in Appendix~\ref{sec:borel--functors}, and
at the same time, we tidy up some details of the proof of the
disintegration theorem itself.

\section{Fell bundles over amenable
  groupoids} \label{sec:fell-bundle-cs}

Our first main theorem says that every Fell bundle over a measurewise
amenable groupoid is metrically amenable.

\begin{thm}
  \label{thm-main-amen}
  Let $G$ be a second-countable locally compact Hausdorff groupoid
  with Haar system $\sset{\lambda^{u}}_{u\in\go}$.  Suppose that
  $p:\B\to G$ is a separable Fell bundle over $G$. If $G$ is
  measurewise amenable, then the reduced norm on $\sacgb$ is equal to
  the universal norm, so $\cs_{r}(G,\B)=\cs(G,\B)$.
\end{thm}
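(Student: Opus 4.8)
The plan is to prove the nontrivial inequality $\|f\|\le\|f\|_{r}$ for every $f\in\sacgb$; since $\|f\|_{r}\le\|f\|$ holds by construction, this yields equality and hence $\cs_{r}(G,\B)=\cs(G,\B)$. Because the universal norm is the supremum of $\|L(f)\|$ over nondegenerate representations $L$ of $\cs(G,\B)$, it suffices to fix such an $L$ and show $\|L(f)\|\le\|f\|_{r}$.

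First I would apply the disintegration theorem \cite{muhwil:dm08}*{Theorem~4.13}, together with the nondegeneracy~\eqref{eq:2}, to realise $L$ as the integrated form of a strict representation coming from a quasi-invariant measure $\mu$ on $\go$, a Borel Hilbert bundle $\go*\HH$, and a nondegenerate Borel $*$-functor $\pi$; thus $L$ acts on $\H_{L}=L^{2}(\go*\HH,\mu)$ via the formula recorded in Section~\ref{sec:disint-theor-revis}. Let $\pi_{0}$ be the representation of $A=\sa_{0}(\go,\B)$ on $\H_{L}$ assembled from the fibre representations $\pi_{u}$ of $A(u)$, and form the induced representation $\Ind\pi_{0}$ on the completion $\H_{\Ind}$ of $\sacgb\atensor\H_{L}$ with respect to~\eqref{eq:IndPi-ip}. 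By the very definition of the reduced norm we have $\|\Ind\pi_{0}(f)\|\le\|f\|_{r}$, so the theorem follows once I show that $L$ is weakly contained in $\Ind\pi_{0}$, that is, that every matrix coefficient $f\mapsto\bip(L(f)h|k)$ is a limit of matrix coefficients of $\Ind\pi_{0}$ with controlled vector norms.

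The mechanism for the weak containment is amenability. Since $G$ is measurewise amenable and $\mu$ is quasi-invariant, $\mu$ is amenable, so the characterisation \cite{anaren:amenable00}*{Proposition~3.2.14(v)} supplies a net $\set{g_{i}}\subset\cc(G)$ of nonnegative functions that are $L^{2}$-normalised along the fibres $G^{u}$ and approximately invariant under the groupoid action. Using these I would build, for each $h$ in a dense subspace of $\H_{L}$, a net of vectors $\xi_{i}(h)\in\H_{\Ind}$ assembled from $g_{i}$ and $h$ (concretely, sections of the form $x\mapsto g_{i}(x)h(s(x))$ in an $L^{2}$-model of $\H_{\Ind}$), and then compute $\bip(\Ind\pi_{0}(f)\xi_{i}(h)|\xi_{i}(k))$ directly from~\eqref{eq:IndPi-ip}. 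The approximate invariance of the $g_{i}$, combined with the quasi-invariance of $\mu$ and the modular function $\Delta$ relating $\lambda_{u}$ and $\lambda^{u}$, should force these coefficients to converge to $\bip(L(f)h|k)$ while $\|\xi_{i}(h)\|\to\|h\|$; this gives $\|L(f)\|\le\|\Ind\pi_{0}(f)\|\le\|f\|_{r}$, as required.

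I expect the main obstacle to be the estimate at the heart of the previous paragraph: one must interface the purely measure-theoretic amenability net, which lives on $G$, with the operator-valued inner products coming from the Borel $*$-functor $\pi$ and the imprimitivity-bimodule structure of the fibres $B(x)$, all while tracking $\Delta$. The fact that the $\pi_{u}$ may be degenerate on a $\mu$-null set (as noted before~\eqref{eq:IndPi-ip}) is a potential nuisance, but it is exactly controlled by the nondegeneracy~\eqref{eq:2} of the disintegrated $*$-functor, which is why that strengthening was recorded in Section~\ref{sec:disint-theor-revis}. Establishing the uniform boundedness of the $\xi_{i}(h)$ and the convergence of the matrix coefficients is where the bulk of the technical work will lie.
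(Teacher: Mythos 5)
Your proposal is correct and follows essentially the same route as the paper: disintegrate $L$ via the strengthened disintegration theorem, realise $\H_{\Ind\pi_{\mu}}$ concretely as an $L^{2}$-space of sections of a pullback bundle, and use the functions supplied by \cite{anaren:amenable00}*{Proposition~3.2.14(v)} to build vectors $h_{n}$ (with the $\Delta^{\half}$ twist you anticipate) whose matrix coefficients for $\Ind\pi_{\mu}$ converge to those of $L$ with $\|h_{n}\|\le 1$. The only cosmetic differences are that the paper works with vector states rather than general matrix coefficients and that the approximating functions are merely Borel rather than in $\cc(G)$, neither of which affects the argument.
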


Our proof follows the lines of Renault's proof of the corresponding
result for groupoid $C^*$-algebras, suitably modified for the bundle
context. Before getting into the proof, we need to do a little
set-up. We will continue with the following notation for the remainder
of the section.

Fix $a\in I_{\cs_{r}(G,\B)}$ and let $L$ be a nondegenerate
representation of $\cs(G,\B)$. As in
Section~\ref{sec:disint-theor-revis}, we may assume that $L$ is the
integrated form of a strict representation
$\bigl(\mu,\go*\HH,\pi\bigr)$ of $\B$ which is nondegenerate in the
sense that~\eqref{eq:2} holds for all $x$. Fix a unit vector $h$ in
$L^{2}(\go*\HH,\mu)$\label{pg:main-thm-setup}, and let $\omega_h$ be
the associated vector state.  To prove Theorem~\ref{thm-main-amen}, it
suffices to see that $\omega_{h}(a)=0$.

Let $\gshr$ be the pullback of $\go*\HH$ over the range map. We may
describe it as follows. Let $(h_{j})^\infty_{j=1}$ be a special
orthorgonal fundamental sequence for $\go*\HH$ as in
\cite{wil:crossed}*{Proposition~F.6}. For each $j$, let $\tilde
h_{j}(x)=h_{j}(r(x)) \in \H(r(x))$.  Then $\gshr$ is isomorphic to the
Borel Hilbert bundle built from $\coprod_{x\in G}\H(r(x))$ with
fundamental sequence $(\tilde h_{j})^\infty_{j=1}$.

Let $\nu = \mu\circ\lambda$ be the measure on $G$ induced by $\mu$,
and recall that $\nu^{-1}$ denotes the measure $\nu^{-1}(f) = \int_G
f(x^{-1}) d\nu(x)$. Since $\mu$ is quasi-invariant, $\nu$ and
$\nu^{-1}$ are equivalent measures.  By passing to an equivalent
measure, we may assume that the Radon-Nikodym derivative
$\Delta=d\nu/d\nu^{-1}$ is multiplicative from $G$ to $(0,\infty)$ ---
there is a nice proof of this in
\cite{muh:cbms}*{Theorem~3.15}.\footnote{The proof in \cite{muh:cbms}
  unfortunatly remains unpublished, but it is based on Hahn's
  \cite{hah:tams78}*{Corollary~3.14}.}

\begin{lemma}
  \label{lem-key-unitary}
  Define $U : \sacgb\atensor L^{2}(\go*\HH,\mu) \to L^{2}(G * \HH_r,
  \nu^{-1})$ by $U(f\tensor h)(x) =
  \pi\bigl(f(x)\bigr)h\bigl(s(x)\bigr)$.  Then $U$ is isometric and
  extends to a unitary, also denoted by $U$, from $\H_{\Ind\pi_{\mu}}$
  onto $L^{2}(G*\HH_{r},\nu^{-1})$.  Furthermore, $U$ intertwines the
  regular representation $\Ind\pi_{\mu}$ with the representation
  $\mpi$ of $\cs(G,\B)$ on $L^{2}(G*\HH,\nu^{-1})$ given on $f\in
  \sacgb$ by
  \begin{equation}
    \label{eq:5}
    \bip(\mpi(f)\xi|\eta)=\int_{G}\int_{G}
    \Bip(\pi(f(xy))\xi(y^{-1})|{\eta(x)}) \,d\lambda^{s(x)}(y)\,d\nu^{-1}(x).
  \end{equation}

\end{lemma}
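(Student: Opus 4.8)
The plan is to establish the three assertions in turn—that $U$ preserves inner products, that it is onto, and that it intertwines $\Ind\pi_\mu$ with $\mpi$—with surjectivity being the crux. For the isometry I would compute $\langle U(f\otimes h),U(g\otimes k)\rangle$ directly. Unwinding the norm on $L^{2}(G*\HH_{r},\nu^{-1})$ and using $\int_{G}F(x)\,d\nu^{-1}(x)=\int_{\go}\int_{G}F(x^{-1})\,d\lambda^{u}(x)\,d\mu(u)$, the change of variable $x\mapsto x^{-1}$ (with $r(x^{-1})=s(x)$, $s(x^{-1})=r(x)$, so that $h(s(x^{-1}))=h(u)$ when $r(x)=u$) rewrites this as an integral of $\bip(\pi(f(x^{-1}))h(u)|\pi(g(x^{-1}))k(u))$ against $\lambda^{u}$ and $\mu$. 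Pulling the second factor over as an adjoint and invoking the $*$-functor identities $\pi(b)^{*}=\pi(b^{*})$ and $\pi(b)\pi(c)=\pi(bc)$, the product $g(x^{-1})^{*}f(x^{-1})$ lands in the fibre $A(u)=A(r(x))$, on which $\pi$ restricts to $\pi_{u}$; the result is exactly the middle line of \eqref{eq:IndPi-ip}. Hence $U$ is isometric and extends to an isometry of $\H_{\Ind\pi_\mu}$ into $L^{2}(G*\HH_{r},\nu^{-1})$.

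For surjectivity—the step I expect to be the main obstacle—I would first note that the range of $U$ is invariant under pointwise multiplication by $C_{c}(G)$: since $\phi f\in\sacgb$ whenever $\phi\in C_{c}(G)$ and $f\in\sacgb$, one has $U((\phi f)\otimes h)=\phi\cdot U(f\otimes h)$. The closure $V$ of the range is therefore a $C_{0}(G)$-submodule of $L^{2}(G*\HH_{r},\nu^{-1})$, and such submodules are precisely the sections valued in a measurable field $x\mapsto V(x)\subseteq\H(r(x))$ of closed subspaces; it then suffices to show $V(x)=\H(r(x))$ for $\nu^{-1}$-almost every $x$. Using separability of $\B$ and of $\go*\HH$, I would choose countably many pairs $(f,h)$ whose fibre values $\pi(f(x))h(s(x))$ span a dense subspace of $\overline{\pi(B(x))\H(s(x))}$ for almost every $x$; the nondegeneracy \eqref{eq:2} of the strict representation then yields $V(x)\supseteq\overline{\pi(B(x))\H(s(x))}=\H(r(x))$ almost everywhere, so $V=L^{2}(G*\HH_{r},\nu^{-1})$ and $U$ is unitary. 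The delicate points are the identification of $C_{0}(G)$-submodules with measurable fields of subspaces and the simultaneous measurable choice of sections realizing the fibrewise density, both of which rest on the standing separability hypotheses.

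Finally, to verify the intertwining I would compute $U\Ind\pi_\mu(f)$ on an elementary tensor. Since $(f*g)(x)=\int_{G}f(y)g(y^{-1}x)\,d\lambda^{r(x)}(y)$ and $\pi$ is a bounded linear, multiplicative $*$-functor on fibres, passing $\pi$ through the integral gives $U((f*g)\otimes h)(x)=\int_{G}\pi(f(y))\,U(g\otimes h)(y^{-1}x)\,d\lambda^{r(x)}(y)$; writing $\xi=U(g\otimes h)$ this exhibits $U\Ind\pi_\mu(f)U^{*}$ as the operator $\xi\mapsto\int_{G}\pi(f(y))\xi(y^{-1}x)\,d\lambda^{r(x)}(y)$. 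To match this with \eqref{eq:5} I would substitute $w=xy$ (so $y^{-1}=w^{-1}x$) in the inner integral of \eqref{eq:5}; left invariance of $\lambda$ converts $d\lambda^{s(x)}(y)$ into $d\lambda^{r(x)}(w)$ and turns $\pi(f(xy))\xi(y^{-1})$ into $\pi(f(w))\xi(w^{-1}x)$, producing precisely the operator just computed. Since elementary tensors are dense and all operators in sight are bounded, this proves $U\Ind\pi_\mu(f)=\mpi(f)U$ for every $f\in\sacgb$.
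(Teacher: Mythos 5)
Your proposal is correct and follows essentially the same route as the paper: the same isometry computation (run from the $L^{2}(G*\HH_{r},\nu^{-1})$ side rather than forward from \eqref{eq:IndPi-ip}), the same change of variables $w=xy$ with left invariance for the intertwining, and for density of the range you spell out the decomposable-subspace argument that the paper delegates to an analogue of \cite{wil:crossed}*{Lemma~F.17}, invoking the nondegeneracy \eqref{eq:2} exactly as intended. The only routine check you omit is that $U(f\otimes h)$ is in fact a Borel section of $G*\HH_{r}$, which the paper verifies by pairing against the fundamental sequence $(\tilde h_{j})$.
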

\begin{proof}
  That $\pi$ is a Borel $*$-functor, $f$ is a continuous section and
  \begin{align*}
    \bip({U(f\tensor h)(x)}|{\tilde h_{j}(x)})&=
    \bip(\pi(f(x))h(s(x))|{h_{j}(r(x)}) \\
    &=\sum^\infty_{k=1}\bip(h(s(x))|{h_{k}(s(x))})
    \bip(\pi(f(x))h_{k}(s(x))|{h_{j}(r(x))}),
  \end{align*}
  imply that $x\mapsto \bip(U(f\tensor h)(x)|{\tilde h_{j}(x)})$ is
  Borel.  Thus $U(g\tensor h) \in B(G*\HH_{r})$.  The representation
  $\pi_{\mu}$ comes from a Borel $*$-functor defined on all of $\B$,
  so the formula~\eqref{eq:IndPi-ip} for the inner product on
  $\H_{\Ind\pi_{\mu}}$ becomes
  \begin{align*}
    \bip(f\tensor h|g\tensor k) &= \int_{\go}\int_{G}
    \bip(\pi\bigl(g(x)^{*}f(x)\bigr) h(u)|{k(u)})
    \,d\lambda_{u}(x)\,d\mu(u) \\
    &=\int_{G}\bip(\pi(f(x))h(s(x))|{\pi(g(x))k(s(x))})\,d\nu^{-1}(x) \\
    &=\int_{G}\bip(U(f\tensor h)(x)|{U(g\tensor k)(x)})\,
    d\nu^{-1}(x).
  \end{align*}
  Hence $U(f\tensor h)\in L^{2}(G*\HH,\nu^{-1})$ and $U$ is an
  isometry.  Since $\pi$ is nondegenerate, an argument like that of
  \cite{wil:crossed}*{Lemma~F.17}, shows that the range of $U$ is
  dense. Hence, $U$ is a unitary as claimed.

  For the last assertion, recall that $\Ind\pi_{\mu}(f)$ acts by
  convolution.  Thus
  \begin{align*}
    \bip(\mpi(f)\xi|\eta)&=\int_{G}\bip(\mpi(f)\xi(x)|\eta{(x)})
    \,d\nu^{-1}(x) \\
    &= \int_{G}\int_{G}\bip(\pi(f(y))\xi(y^{-1}x)|{\eta(x)})
    \,d\lambda^{r(x)}(y) \,d\nu^{-1}(x) \\
    &=\int_{G}\int_{G} \bip(\pi(f(xy))\xi(y^{-1})|{\eta(x)})
    \,d\lambda^{s(x)}(y)\,d\nu^{-1}(x).\qedhere
  \end{align*}
\end{proof}

To prove Theorem~\ref{thm-main-amen}, we invoke measurewise
amenability in the form of
\cite{anaren:amenable00}*{Proposition~3.2.14(v)}. So we fix a sequence
$(f_{n})^\infty_{n=1}$ of Borel functions on $G$ such that
\begin{compactenum}
\item $u\mapsto \int_{G}|f_{n}(x)|^{2}\,d\lambda^{u}(x)$ is bounded on
  $\go$,
\item $f^*_n * f_n(u) \le 1$ for all $u\in\go$ and
\item $f^*_n * f_n \to 1$ in the weak-$*$ topology on
  $L^{\infty}(G,\nu)$.
\end{compactenum}
To keep notation compact, we denote $f^*_n * f_n$ by $e_n$, so that
for $y \in G$,
\begin{equation*}
  e_{n}(y) = \int_{G}\overline{f_{n}(x^{-1})} f_{n}(x^{-1}y) \,d\lambda^{r(y)}(x),
\end{equation*}

\begin{proof}[Proof of Theorem~\ref{thm-main-amen}]
  Recall the notation fixed at the beginning of the section: in
  particular, $L$ is the integrated form of a nondegenerate strict
  representation of $C^*(G,\B)$ on a Hilbert bundle $\go * G$, $h$ is
  a unit vector in $L^{2}(\go*\HH,\mu)$ and $\omega_h$ is the
  associated vector state. We claim that $|\omega_h(g)| \le
  \|(\Ind\pi_{\mu})(g)\|$ for all $g \in \sacgb$.

  Fix $g \in \sacgb$. Then
  \begin{equation*}
    \omega_{h}(g)=\bip(L(g)h|h)=\int_{G}\bip(\pi(g(y))h(s(y))|{h(r(y))})
    \Delta(y)^{-\half}\,d\nu(y).
  \end{equation*}
  Define a sequence $(\alpha_n)^\infty_{n=1}$ of complex numbers by
  \begin{align}
    \alpha_n &:= \int_{G}e_{n}(y)\bip(\pi(g(y))h(s(y))|{h(r(y))})
    \Delta(y)^{-\half}\,d\nu(y) \nonumber \\
    &= \int_{\go}\int_{G}\int_{G}
    \overline{f_{n}(x^{-1})}f_{n}(x^{-1}y)
    \bip(\pi(g(y))h(s(y))|{h(r(y))})
    \Delta(y)^{-\half} \label{eq:alpha_n expr}\\
    &\hskip3in\,d\lambda^{u}(x) \,d\lambda^{u}(y)\,d\mu(u).\nonumber
  \end{align}
  (It is tempting to write $\omega_{h}(e_ng)$ for $\alpha_n$, but the
  $e_n$ are assumed only to be Borel, so the pointwise products
  $e_{n}g$ may not belong to $\sacgb$.) By assumption on the $e_{n}$,
  the $\alpha_n$ converge to $\omega_{h}(g)$. So it suffices to show
  that
  \begin{equation}\label{eq:suffices}
    |\alpha_n| \le \|(\Ind\pi_{\mu})(g)\|\quad\text{ for all $n \in \mathbb{N}$.}
  \end{equation}

  Fix $n \in \mathbb{N}$. Define $h_n : G \to \HH$ by
  \[
  h_{n}(x)=\Delta(x)^{\half}f_{n}(x^{-1})h\bigl(r(x)\bigr).
  \]
  Then for each $j$, the function
  \begin{equation*}
    x\mapsto \bip(h_{n}(x)|{\tilde h_{j}(x)})
    =\Delta(x)^{\half} f_{n}(x^{-1}) \bip(h(r(x))|{h_{j}(r(x))})
  \end{equation*}
  is Borel, so $h_{n}\in L^{2}(G*\HH,\nu^{-1})$. Starting
  from~\eqref{eq:alpha_n expr}, we apply Fubini's theorem, substitute
  $xy$ for $x$, and then use first that $\nu = \Delta\nu^{-1}$ and
  then that $\Delta$ is multiplicative to calculate:
  \begin{align*}
    \alpha_n &= \int_{G}\int_{G} \overline{f_{n}(x^{-1})} f_{n}(y)
    \bip (\pi(g(xy)) h(s(y))|{h(r(x))}) \Delta(xy)^{-\half}
    \,d\lambda^{s(x)}(y) \,d\nu(x) \\
    &= \int_{G}\int_{G} \overline{f_{n}(x^{-1})}f_{n}(y) \bip(
    \pi(g(xy)) h(s(y)) | {h(r(x))}) \\
    &\hskip12em \Delta(xy)^{-\half}\Delta(x)
    \,d\lambda^{s(x)}(y) \,d\nu^{-1}(x) \\
    &=\int_{G}\int_{G}
    \bip(\pi(g(xy))h_{n}(y^{-1})|{h_{n}(x)})\,d\lambda^{s(x)}(y)
    \,d\nu^{-1}(x)\\
    &=\bip(\mpi(g)h_{n}|h_{n}).
  \end{align*}

  We have
  \[
  \|h_{n}\|^{2} = \int_{G}\|h_{n}(x)\|^{2}\,d\nu^{-1}(x)
  =\int_{G}|f_{n}(x^{-1})|^{2} \|h(r(x))\|^{2} \Delta(x)
  \,d\nu^{-1}(x),
  \]
  and since $\nu=\Delta\nu^{-1}$ and $e_n(u) \le 1$ for all $u$, it
  follows that
  \[
  \|h_{n}\|^2 =\int_{\go}e_{n}(u)\|h(u)\|^{2}\,d\mu(u) \le \|h\|^2 =
  1.
  \]

  Hence the Cauchy-Schwarz inequality gives~\eqref{eq:suffices}. Thus
  $|\omega_h(g)| \le \|(\Ind\pi_\mu)(g)\|$ for all $g \in \sacgb$ as
  claimed. Since $\sacgb$ is dense in $\cs(G,\B)$, it follows that
  \begin{equation*}
    |\omega_{h}(a)|\le \|(\Ind\pi_{\mu})(a)\|\quad\text{for all $a\in \cs(G,\B)$.}
  \end{equation*}
  In particular, if $a\in I_{\cs_{r}(G,\B)}$, then $(\Ind\pi_{\mu})(a)
  = 0$, and hence $\omega_{h}(a)=0$ as required.
\end{proof}

\begin{example}\label{eg:DKPS}
  Recall from \cite{dkps:iumj11} that given a row-finite $k$-graph
  $\Lambda$ with no sources, a $\Lambda$-system of
  $C^*$-correspondences consists of an assignment $v \mapsto A_v$ of
  $C^*$-algebras to vertices and an assignment $\lambda \mapsto
  X_\lambda$ of an $A_{r(\lambda)}$--$A_{s(\lambda)}$ correspondence
  to each path $\lambda$, together with isomorphisms $\chi_{\mu,\nu} :
  X_\mu \otimes_{A_{s(\mu)}} X_\nu \to X_{\mu\nu}$ for each composable
  pair $\mu,\nu \in \Lambda$, all subject to an appropriate
  associativity condition on the $\chi_{\mu,\nu}$ (see
  \cite{dkps:iumj11}*{Definitions 3.1.1~and~3.1.2} for
  details). Suppose that $X$ is such a system, and suppose that each
  $X_\lambda$ is nondegenerate as a left $A_{r(\lambda)}$-module, and
  full as a right Hilbert $A_{s(\lambda)}$-module, and that the left
  action of $A_{r(\lambda)}$ is by compact operators.

  By \cite{dkps:iumj11}*{Theorem~4.3.1}, the construction of Sections
  4.1~and~4.2 of the same paper associates to $X$ a saturated Fell
  bundle $E_X$ over the $k$-graph groupoid $G_\Lambda$ of
  \cite{kumpas:nyjm00}. Moreover, \cite{dkps:iumj11}*{Theorem~4.3.6}
  says that the $C^*$-algebra $C^*(A,X,\chi)$ of the $\Lambda$-system
  is isomorphic to the reduced $C^*$-algebra $\cs_{r}(G_\Lambda, E_X)$
  of the Fell bundle.

  Theorem~5.5 of \cite{kumpas:nyjm00} says that $G_\Lambda$ is
  amenable, and hence also measurewise amenable. Hence our
  Theorem~\ref{thm-main-amen} implies that $\cs_{r}(G_\Lambda, E_X) =
  \cs(G_\Lambda, E_X)$; in particular $\cs(A,X,\chi) \cong
  \cs(G_\Lambda, E_X)$.
\end{example}

Since $1$-graphs are precisely the path-categories $E^*$ of countable
directed graphs $E$, and since an $E^*$-system of correspondences can
be constructed from any assignment of $C^*$-algebras $A_v$ to vertices
$v$, and $A_{r(e)}$--$A_{s(e)}$ $C^*$-correspondences $X_e$ to edges
$e$ (see \cite{dkps:iumj11}*{Remark~3.1.5}), Example~\ref{eg:DKPS}
provides a substantial library of examples of our result

\section{Measurewise amenable groupoids}
\label{sec:meas-amen-group}

Our initial motivation for proving Theorem~\ref{thm-main-amen} was to
show that if $G$ has $T_0$ orbit space and amenable stability groups
then the full and reduced \cs-algebras of any Fell bundle over $G$
coincide: roughly, since $\cs(G,\B)$ is a $C_0(\gugo)$-algebra,
representations will factor through restrictions to orbit groupoids
$G([u])$, each of which is amenable because is is equivalent to the
amenable stability group $G(u) := \{x \in G : r(x) = u = s(x)\}$
(see section~\ref{sec:induced amenability} for
  details). However, the following argument shows that
 the result follows directly from
Theorem~\ref{thm-main-amen}. We thank Jean Renault for pointing us in
the direction of \cite{anaren:amenable00}*{Proposition~5.3.4}.

\begin{thm}
  \label{thm-smooth-amen}
  Suppose that $G$ is a second countable locally compact Hausdorff
  groupoid with Haar system $\sset{\lambda^{u}}_{u\in\go}$.  Suppose
  that the orbit space $\gugo$ is $T_{0}$ and that each stability
  group $G(u)$ is amenable.  Then $G$ is measurewise amenable.
\end{thm}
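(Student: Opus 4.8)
The plan is to verify measurewise amenability head-on, by showing that every quasi-invariant measure on $\go$ is amenable; as recorded in Section~\ref{sec:amenable-groupoids}, it suffices to treat finite quasi-invariant measures $\mu$. The first move is to pass to the ergodic decomposition of $\mu$. Using the ergodic-decomposition machinery of \cite{anaren:amenable00} together with \cite{anaren:amenable00}*{Proposition~5.3.4}, amenability of $\mu$ follows once almost every ergodic component is amenable, so I would reduce immediately to the case in which $\mu$ is itself ergodic.

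Next I would exploit the hypothesis that $\gugo$ is $T_0$. Since $G$ is second countable, the orbit space is then countably separated, so the orbit equivalence relation is smooth, and by the transitivity theorem for smooth measured groupoids (Ramsay's theorem, in the form presented in \cite{anaren:amenable00}) an ergodic quasi-invariant measure must be concentrated on a single orbit. Concretely, the push-forward of $\mu$ to $\gugo$ is an ergodic measure on a countably separated Borel space whose induced equivalence relation is trivial, and such a measure is a point mass; hence $\mu$ is carried, up to a null set, by a single orbit $[u_0] \subseteq \go$. Consequently the measured groupoid $(G,\mu)$ agrees off a null set with the measured transitive groupoid $G([u_0])$, and $L^\infty(G,\nu)$ is identified with $L^\infty(G([u_0]),\nu)$, where $\nu = \mu\circ\lambda$.

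Finally I would identify $G([u_0])$ with its isotropy. A transitive groupoid is equivalent to any of its stability groups, so $G([u_0])$ is groupoid-equivalent to $G(u_0)$, which is amenable by hypothesis, hence amenable as a groupoid and so measurewise amenable. Since measurewise amenability is preserved under equivalence \cite{anaren:amenable00}*{Theorem~3.2.16}, $G([u_0])$ is measurewise amenable; in particular the quasi-invariant measure $\mu$ on $[u_0]$ is amenable, and transporting the invariant mean back along the identification $L^\infty(G,\nu)\cong L^\infty(G([u_0]),\nu)$ shows that $\mu$ is amenable for $G$. As the ergodic $\mu$ was arbitrary, every ergodic quasi-invariant measure is amenable, and then \cite{anaren:amenable00}*{Proposition~5.3.4} upgrades this to amenability of every finite quasi-invariant measure, so $G$ is measurewise amenable.

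The real work, and the step I expect to be the main obstacle, lies in the two measure-theoretic passages. First, converting the topological hypothesis that $\gugo$ is $T_0$ into the statement that ergodic quasi-invariant measures are transitive requires the smoothness/transitivity apparatus for measured groupoids, and one must be careful that this conclusion is available for each ergodic component produced by the decomposition. Second, one must check that the equivalence between the transitive groupoid $G([u_0])$ and the group $G(u_0)$ interacts correctly with the measure at hand, so that measurewise amenability of $G(u_0)$ genuinely delivers amenability of the specific $\mu$; this is exactly where invariance of measurewise amenability under equivalence and the assembly provided by \cite{anaren:amenable00}*{Proposition~5.3.4} do the heavy lifting. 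By contrast, the Haar-system and convolution bookkeeping involved in restricting to $[u_0]$ is routine and I would not dwell on it.
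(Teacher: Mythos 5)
Your proposal is correct and follows essentially the same route as the paper: reduce to finite quasi-invariant measures, decompose over orbits using the countable separation of $\gugo$ supplied by the Mackey--Glimm--Ramsay dichotomy, assemble with \cite{anaren:amenable00}*{Proposition~5.3.4}, and handle each transitive piece via the equivalence of $G([u])$ with the amenable stability group $G(u)$ together with invariance of amenability under equivalence. The only cosmetic difference is that the paper disintegrates $\mu$ directly over the countably separated orbit space rather than passing first to the ergodic decomposition and then observing that each ergodic component is carried by a single orbit; the paper also isolates (Lemma~\ref{lem-ramsay}) the fact that each orbit is locally closed, hence locally compact, which is exactly what makes $G([u])$ a locally compact transitive groupoid equivalent to $G(u)$.
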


Our proof requires some straightforward observations as well as some
nontrivial results from \cite{anaren:amenable00}.

\begin{lemma}
  \label{lem-reduce}
  Suppose that $\mu$ is a quasi-invariant finite measure on $\go$ and
  that $F\subset \go$ is a locally compact $G$-invariant subset such
  that $\mu(\go\setminus F)=0$. Then $(G,\lambda,\mu)$ is amenable if
  and only if $(G(F),\lambda\restr F,\mu\restr F)$ is amenable.
\end{lemma}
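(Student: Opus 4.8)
The plan is to exploit the fact that, because $F$ is $G$-invariant and $\mu$ is concentrated on $F$, the induced measure $\nu = \mu\circ\lambda$ is concentrated on the reduction $G(F)$, so that all the data entering the amenability criterion actually live on $G(F)$. I would then transport the characterization of \cite{anaren:amenable00}*{Proposition~3.2.14(v)} --- the same criterion used in the proof of Theorem~\ref{thm-main-amen} --- between $(G,\lambda,\mu)$ and $(G(F),\lambda\restr F,\mu\restr F)$, restricting the witnessing sequences in one direction and extending them by zero in the other.

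First I would record the consequence of invariance: since $F$ is $G$-invariant, $r(x)\in F$ forces $s(x)\in F$ and conversely, so $G(F)=r^{-1}(F)=s^{-1}(F)$. In particular $G(F)$ is closed under inversion and each $\lambda^{u}$ with $u\in F$ is supported on $G(F)$. Together with $\mu(\go\setminus F)=0$ this gives $\nu(G\setminus G(F))=\int_{\go}\lambda^{u}(G\setminus G(F))\,d\mu(u)=0$, because for $u\in F$ the integrand vanishes (as $\lambda^{u}$ lives on $r^{-1}(u)\subseteq G(F)$) and the set of $u\notin F$ is $\mu$-null. Hence restriction is an isometric isomorphism $L^{\infty}(G,\nu)\cong L^{\infty}(G(F),\nu\restr{G(F)})$, and $\nu\restr{G(F)}$ is precisely the measure $(\mu\restr F)\circ(\lambda\restr F)$ induced on $G(F)$ by the restricted data. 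Applying the same reasoning to $\nu^{-1}$, which is concentrated on the inversion-invariant set $G(F)$ as well, shows that $\mu$ is quasi-invariant for $(G,\lambda)$ if and only if $\mu\restr F$ is quasi-invariant for $(G(F),\lambda\restr F)$, with modular functions agreeing $\nu$-almost everywhere; so the hypotheses of \cite{anaren:amenable00}*{Proposition~3.2.14(v)} hold for one triple exactly when they hold for the other.

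With this identification in hand the two implications are symmetric. For the forward direction I would take a sequence $(f_{n})$ on $G$ witnessing amenability of $(G,\lambda,\mu)$ and restrict each $f_{n}$ to $G(F)$. Because $\lambda^{u}$ is supported on $G(F)$ for $u\in F$, the quantities $\int_{G}|f_{n}|^{2}\,d\lambda^{u}$ and the convolutions $f^{*}_{n}*f_{n}$ are unchanged on $F$ under this restriction, so conditions~(1) and~(2) persist, while condition~(3) persists because $f^{*}_{n}*f_{n}$ agrees $\nu$-a.e. with its restriction and $L^{\infty}(G,\nu)=L^{\infty}(G(F),\nu\restr{G(F)})$. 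Conversely, given a witnessing sequence $(g_{n})$ on $G(F)$, I would extend each $g_{n}$ by zero to a Borel function $f_{n}$ on $G$. For $u\notin F$ the fibre $r^{-1}(u)$ misses $G(F)$, so $f_{n}$, and hence $\int_{G}|f_{n}|^{2}\,d\lambda^{u}$ and $f^{*}_{n}*f_{n}(u)$, all vanish there; for $u\in F$ nothing changes. Thus conditions~(1)--(3) hold over all of $\go$, using once more that for $y\in G(F)$ the value $f^{*}_{n}*f_{n}(y)$ integrates $f_{n}$ only over $r^{-1}(r(y))\subseteq G(F)$, where $f_{n}=g_{n}$.

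The one genuine point to get right --- and the step I would treat most carefully --- is this locality of the convolution: that for $\nu$-almost every $y$ the value $(f^{*}_{n}*f_{n})(y)$ depends only on the restriction of $f_{n}$ to $G(F)$. This is exactly what $G$-invariance of $F$ buys us through $r^{-1}(F)=G(F)$, and it is what makes both restriction and extension-by-zero respect the three defining conditions. Everything else is the bookkeeping of carrying the criterion of \cite{anaren:amenable00}*{Proposition~3.2.14(v)} across the isometric identification $L^{\infty}(G,\nu)\cong L^{\infty}(G(F),\nu\restr{G(F)})$.
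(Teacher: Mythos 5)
Your proposal is correct, and its load-bearing observation is exactly the one the paper uses: since $F$ is $G$-invariant we have $G(F)=r^{-1}(F)=s^{-1}(F)$, so $\nu=\mu\circ\lambda$ is concentrated on $G(F)$, $\mu\restr F\circ\lambda\restr F=\nu\restr{G(F)}$, and hence $L^{\infty}(G,\nu)\cong L^{\infty}(G(F),\nu\restr{G(F)})$. Where you diverge is in which characterization of amenability you transport across this identification. The paper simply invokes the definition --- amenability of $(G,\lambda,\mu)$ means there is an invariant mean on $L^{\infty}(G,\nu)$ --- and notes that an invariant mean on one side is an invariant mean on the other, which finishes the proof in two sentences. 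You instead transport the approximate-invariance sequences of \cite{anaren:amenable00}*{Proposition~3.2.14(v)} by restriction in one direction and extension by zero in the other, carefully checking that the convolution $f_{n}^{*}*f_{n}$ at a point of $G(F)$ only sees the restriction of $f_{n}$ to $G(F)$. That locality check is correct (it is precisely the identity $r^{-1}(F)=G(F)$ again), and the weak-$*$ convergence survives because the complement of $G(F)$ is $\nu$-null; so your argument is sound, just longer. The mean-level argument is cleaner here because it avoids re-verifying the three conditions; your sequence-level argument has the mild advantage of being the same criterion the paper later uses in the proof of Theorem~\ref{thm-main-amen}, so it keeps the whole development inside one characterization of amenability.
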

\begin{proof}
  Recall that $(G,\lambda,\mu)$ is amenable if there is an invariant
  mean on $L^{\infty}(G,\nu)$ where $\nu=\mu\circ\lambda$.  Since
  $\mu\restr F\circ \lambda\restr F=\nu\restr {G(F)}$, we have
  $L^{\infty}(G,\nu) \cong L^{\infty}(G(F), \mu\restr F \circ
  \lambda\restr F)$.  In particular, an invariant mean on
  $L^{\infty}(G)$ gives an invariant mean on $L^{\infty}(G(F))$
  and vice versa.
\end{proof}

\begin{lemma}
  \label{lem-ramsay}
  Suppose that $\gugo$ is $T_{0}$.  Then, as a Borel space, $\gugo$ is
  countably separated and each orbit $[u]$ is locally closed in $G$
  and hence locally compact.
\end{lemma}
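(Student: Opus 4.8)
The plan is to treat the two assertions separately, handling countable separation by a soft argument and reserving the real work for local closedness of the orbits.

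For countable separation, I would first record that the orbit map $q\colon \go \to \gugo$, $u \mapsto [u]$, is continuous and open: since $G$ carries a Haar system the range map $r$ is open, and the saturation of an open $U\subseteq\go$ is $r(s^{-1}(U))$, which is therefore open, so $q$ carries open sets to open sets. As $\go$ is second countable, fixing a countable base $\sset{U_n}$ and using openness of $q$ shows that $\sset{q(U_n)}$ is a base for $\gugo$, so $\gugo$ is second countable. The $T_0$ hypothesis then finishes it: given distinct orbits $[u]\neq[v]$ there is an open $W\subseteq\gugo$ containing exactly one of them, and pulling back through $q$ and shrinking inside the base yields some $q(U_n)$ containing $[u]$ but not $[v]$. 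Thus the countable family $\sset{q(U_n)}$ of (open, hence Borel) sets separates the points of $\gugo$, which is exactly countable separation.

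The substantive assertion is that each orbit is locally closed. I would fix $u\in\go$ and pass to $Y:=\overline{[u]}$, the closure of the orbit in $\go$; since the action is continuous, $Y$ is a closed $G$-invariant set, hence a Polish $G$-space in which $[u]$ sits as a dense orbit. The orbit $[u]=r\bigl(\set{x\in G:s(x)=u}\bigr)$ is a continuous image of the Polish fibre $s^{-1}(u)$, so it is analytic and has the Baire property in $Y$. The crucial input is the topological Mackey--Glimm--Effros dichotomy (proved for transformation groups by Effros and extended to groupoids by Ramsay): such a homogeneous, Baire-property subset of $Y$ is \emph{either} locally closed --- equivalently, since it is dense, open in $Y$ --- \emph{or} meager in $Y$. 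To conclude I would exclude the meager alternative using $T_0$: if $[u]$ were meager, then $Y\setminus[u]$ would be comeager and $G$-invariant, and a Baire-category argument would produce an orbit $[w]\subseteq Y\setminus[u]$ that is again dense in $Y$. Then $u\in Y=\overline{[w]}$ while $w\in Y=\overline{[u]}$, so the distinct orbits $[u]$ and $[w]$ each lie in the closure of the other, contradicting the $T_0$ separation of $\gugo$. Hence $[u]$ is open in $Y$, so locally closed in $\go$; since $\go$ is closed in $G$ it is locally closed in $G$, and hence locally compact (a subset of a locally compact Hausdorff space is locally compact precisely when it is locally closed).

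The main obstacle is exactly the meager-versus-locally-closed dichotomy for a single orbit together with the extraction of a second dense orbit from the comeager complement: both rest on Baire category combined with the homogeneity supplied by the $G$-action (a Pettis-style difference-set argument for the former, topological transitivity for the latter), and it is precisely here that the groupoid refinement of the classical group statement needs care. In practice I would lean on Ramsay's treatment for this hard direction rather than reprove the dichotomy from scratch, using the soft argument above only for countable separation and for the reduction to a single orbit inside its Polish closure.
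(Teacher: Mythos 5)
Your argument is correct and ultimately rests on the same key input as the paper, namely the Mackey--Glimm--Ramsay dichotomy \cite{ram:jfa90}*{Theorem~2.1}, which the paper simply quotes in the form of the equivalence of $T_0$-ness, countable separation, and local closedness of orbits. The extra material you supply --- openness of the orbit map via the Haar system, the second-countability argument for countable separation, and the Baire-category extraction of a second dense orbit to rule out the meager alternative of the per-orbit Effros dichotomy --- is a sound unwinding of part of the proof of that cited equivalence rather than a genuinely different route.
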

\begin{proof}
  Since subsets of a locally compact Hausdorff space are locally
  compact if and only if they are locally closed (see
  \cite{wil:crossed}*{Lemma~1.26}), the lemma is an immediate
  consequence of the Mackey-Glimm-Ramsay dichotomy
  \cite{ram:jfa90}*{Theorem~2.1}.
\end{proof}

\begin{proof}[Proof of Theorem~\ref{thm-smooth-amen}]
  Suppose that $\mu$ is a finite quasi-invariant measure on $\go$.  It
  suffices to show that $(G,\lambda,\mu)$ is amenable.  Let
  $p:G\to\gugo$ be the orbit map, and let $\underline\mu$ be the
  forward image $\underline\mu(f) = \mu(f \circ p)$ of $\mu$ under
  $p$.  By Lemma~\ref{lem-ramsay}, $\gugo$ is countably
  separated as a Borel space. Hence we can disintegrate $\mu$ --- as,
  for example, in \cite{wil:crossed}*{Theorem~I.5} --- so that for
  each orbit $[u]$ there is a probability measure $\rho_{[u]}$ on
  $\go$ supported on $[u]$ such that
  \begin{equation*}
    \mu=\int_{\gugo}\rho_{[u]}\,d\underline\mu([u]).
  \end{equation*}
  It follows from \cite{anaren:amenable00}*{Proposition~5.3.4} that
  $\rho_{[u]}$ is quasi-invariant for almost all $[u]$ and that
  $(G,\lambda,\mu)$ is amenable if each $(G,\lambda,\rho_{[u]})$
  is. Since $\rho_{[u]}(\go\setminus [u])=0$, Lemma~\ref{lem-reduce}
  implies that it is enough to see that each
  $(G([u]),\lambda\restr{[u]}, \mu\restr{[u]})$ is amenable.
  Since $[u]$ is locally compact, $G([u])$ is a locally compact
  transitive groupoid equivalent to $G(u)$, which is assumed to be
  amenable. Hence \cite{anaren:amenable00}*{Theorem~2.2.13} implies
  that $G([u])$ is amenable, and therefore also measurewise
  amenable by \cite{anaren:amenable00}*{Proposition~3.3.5}.
\end{proof}

\section{Fibrewise-amenable Fell bundles}\label{sec:induced
  amenability}

In the preceding section, we showed that if $\gugo$ is $T_0$ and each
stability group is amenable, then $G$ is measurewise amenable. In
particular, if $p : \B \to G$ is a bundle over such a groupoid, then
its full and reduced algebras coincide. In this section, we show that
it suffices that $\gugo$ is $T_0$ and that for each $u \in \go$, the
full and reduced algebras of the restriction of $\B$ to the isotropy
group $G(u)$ coincide. To see that this is a strictly stronger
theorem, and also that the hypothesis is genuinely checkable, we refer
the reader to the results, for example, of \cite{exe:jfram97}.

\begin{thm}\label{thm:induced metric amenability}
  Let $G$ be a second-countable locally compact Hausdorff groupoid
  with Haar system $\sset{\lambda^{u}}_{u\in\go}$, and let $p : \B \to
  G$ be a separable Fell bundle over $G$. Suppose that the orbit space
  $\gugo$ is $T_{0}$ and that for each unit $u$, the full and reduced
  cross-sectional algebras $\cs(G(u),\B)$ and $\cs_{r}(G(u),\B)$
  coincide. Then the full and reduced norms on $\sacgb$ are equal and
  hence $\cs_{r}(G,\B) = \cs(G,\B)$.
\end{thm}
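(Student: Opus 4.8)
The plan is to follow the strategy of Theorem~\ref{thm-smooth-amen}, but to carry the disintegration over the orbit space all the way up to the level of representations. To prove $\cs_{r}(G,\B)=\cs(G,\B)$ it suffices to show that $L(a)=0$ for every $a\in I_{\cs_{r}(G,\B)}$ and every nondegenerate representation $L$ of $\cs(G,\B)$, since then $I_{\cs_{r}(G,\B)}$ lies in the kernel of every representation and hence is zero. Using the disintegration theorem as in Section~\ref{sec:disint-theor-revis}, I would realize $L$ as the integrated form of a nondegenerate strict representation $(\mu,\go*\HH,\pi)$; replacing $\mu$ by an equivalent measure if necessary, I may assume $\mu$ is finite, as quasi-invariance and the orbit disintegration depend only on its measure class. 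Since $\gugo$ is $T_{0}$, Lemma~\ref{lem-ramsay} lets me disintegrate $\mu$ over the orbit map $p:\go\to\gugo$ exactly as in the proof of Theorem~\ref{thm-smooth-amen}, writing $\mu=\int_{\gugo}\rho_{[u]}\,d\underline\mu([u])$ with each $\rho_{[u]}$ a probability measure supported on the locally closed orbit $[u]$ and quasi-invariant for $\underline\mu$-almost every $[u]$ by \cite{anaren:amenable00}*{Proposition~5.3.4}.

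This disintegration produces a direct-integral decomposition $L^{2}(\go*\HH,\mu)\cong\dint_{\gugo}L^{2}(\go*\HH,\rho_{[u]})\,d\underline\mu([u])$ under which $L\cong\dint_{\gugo}L_{[u]}\,d\underline\mu([u])$, where $L_{[u]}$ is the integrated form of the strict representation $(\rho_{[u]},\go*\HH,\pi)$. The key point is that, because $\rho_{[u]}$ is concentrated on the single orbit $[u]$, the defining integral for $L_{[u]}$ runs only over $G([u])$; thus $L_{[u]}(f)$ depends only on $f\restr{G([u])}$, and $L_{[u]}$ factors through the fibre homomorphism $q_{[u]}:\cs(G,\B)\to\cs(G([u]),\B)$ supplied by the $C_{0}(\gugo)$-algebra structure of $\cs(G,\B)$. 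Now $G([u])$ is a transitive groupoid with isotropy $G(u)$, hence is equivalent to $G(u)$, and under this equivalence $\B\restr{G([u])}$ corresponds to $\B\restr{G(u)}$. Invoking the equivalence theorem for Fell bundle $C^{*}$-algebras, which preserves the full and the reduced algebras alike (the analogue for metric amenability of groupoids is \cite{simwil:jot11}*{Theorem~17}; compare \cite{simwil:xx11}), the hypothesis $\cs(G(u),\B)=\cs_{r}(G(u),\B)$ upgrades to $\cs(G([u]),\B)=\cs_{r}(G([u]),\B)$ for every $u$.

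It remains to show that $q_{[u]}(a)=0$ in $\cs(G([u]),\B)$ for $\underline\mu$-almost every $[u]$, for then $L_{[u]}(a)=0$ almost everywhere and hence $L(a)=0$. For this I would verify that $q_{[u]}$ is reduced-norm decreasing, so that it descends to a homomorphism $\cs_{r}(G,\B)\to\cs_{r}(G([u]),\B)$ and carries $I_{\cs_{r}(G,\B)}$ into $I_{\cs_{r}(G([u]),\B)}$. This can be read off from the inner-product formula~\eqref{eq:IndPi-ip}: a representation $\sigma$ of $\sa_{0}([u],\B)$ inflates, since $[u]$ is locally closed, to a representation $\pi$ of $A=\sa_{0}(\go,\B)$ supported on $[u]$, and then $\Ind\pi$ factors as $\Ind\sigma\circ q_{[u]}$ because every integral in~\eqref{eq:IndPi-ip} collapses onto $[u]$ and $G([u])$; taking suprema over such $\sigma$ gives $\|q_{[u]}(f)\|_{r}\le\|f\|_{r}$. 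Since the fibre $\cs(G([u]),\B)$ is metrically amenable, $I_{\cs_{r}(G([u]),\B)}=0$, so $q_{[u]}(a)=0$ and the proof concludes. I expect the main obstacle to be the bookkeeping in this orbit-localization: making the inflation of representations over the merely locally closed orbits precise, confirming that the direct-integral decomposition of $L$ is compatible with the Borel Hilbert bundle, the $*$-functor $\pi$, and the restricted modular functions, and correctly invoking the Fell-bundle equivalence theorem to transport metric amenability from $G(u)$ to the transitive groupoid $G([u])$.
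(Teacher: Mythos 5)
Your overall strategy---disintegrate over the orbit space, localize to the transitive orbit groupoids $G([u])$, and import metric amenability from the isotropy group via the equivalence theorem---is in the right spirit, and the measure disintegration via \cite{anaren:amenable00}*{Proposition~5.3.4} and the use of the equivalence theorem to pass from $G(u)$ to $G([u])$ (our Lemma~\ref{lem:fibre equivalence}) match what the paper does. But there is a genuine gap at the central step: the ``fibre homomorphism $q_{[u]}:\cs(G,\B)\to\cs(G([u]),\B)$ supplied by the $C_{0}(\gugo)$-algebra structure'' does not exist under the stated hypotheses. The orbit space is only assumed $T_{0}$, hence by Mackey--Glimm--Ramsay only \emph{locally} Hausdorff, and $\cs(G,\B)$ is a $C_{0}(\gugo)$-algebra only when $\gugo$ is Hausdorff (that is precisely the hypothesis of Corollary~\ref{cor-hausdorff-go}). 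Concretely, an orbit $[u]$ is merely locally closed, so by Lemma~\ref{lem-special-ionwil} the algebra $\cs(G([u]),\B)$ is an \emph{ideal} in the quotient $\cs(G(\overline{[u]}),\B)$, i.e.\ a subquotient of $\cs(G,\B)$, and there is no natural surjection of an algebra onto an ideal of one of its quotients. (Already for $\R^{\times}_{+}$ acting on $\R$ by scaling, whose orbit space is $T_{0}$, there is no natural map from the crossed product onto the ideal corresponding to the open, non-closed orbit $(0,\infty)$.) The same problem appears at the level of sections: $f\mapsto f\restr{G([u])}$ does not carry $\sacgb$ into $\sa_{c}(G([u]),\B)$, since the intersection of a compact set with the locally closed set $G([u])$ need not be compact. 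Consequently the factorization $\Ind\pi=\Ind\sigma\circ q_{[u]}$ and the estimate $\|q_{[u]}(f)\|_{r}\le\|f\|_{r}$, on which your conclusion rests, have nothing to factor through.

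The gap is repairable, but the repair is essentially the paper's proof. There one first stratifies $\gugo$ by a composition series of open invariant sets $V_{n}$ whose successive differences have \emph{Hausdorff} orbit space (Mackey--Glimm--Ramsay together with \cite{wil:crossed}*{Lemma~6.3}); each irreducible representation then lives on a subquotient $\cs(G(V_{n}\setminus V_{n-1}),\B)$ which genuinely is a $C_{0}(U_{n}\setminus U_{n-1})$-algebra with fibres $\cs(G([u]),\B)$, and the passage between representations of ideals, quotients and the ambient algebra is handled by canonical lifts rather than by a homomorphism onto the fibre. If you want to keep your direct-integral decomposition of $L$, you would have to replace $q_{[u]}$ by the assertion that $L_{[u]}$ is the canonical lift of a nondegenerate representation of the ideal $\cs(G([u]),\B)$ of $\cs(G(\overline{[u]}),\B)$, and then prove $\|L_{[u]}(f)\|\le\|f\|_{\cs_{r}(G,\B)}$ by lifting the induced representation of a faithful representation of $\sa_{0}([u],\B)$ back up to $\cs(G,\B)$---this is exactly the $\Ind\bar\pi_{[u]}$ computation at the end of the paper's argument. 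As written, however, your key step relies on a map that is not there.
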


To prove the theorem, we first use the equivalence theorem of
\cite{simwil:xx11} to see that the full and reduced $C^*$-algebras of
a Fell bundle over transitive groupoid coincide whenever the full and
reduced algebras of its restriction to any isotropy group
coincide.

\begin{lemma}\label{lem:fibre equivalence}
  Let $G$ be a second-countable locally compact Hausdorff groupoid
  with Haar system $\sset{\lambda^{u}}_{u\in\go}$, and let $p : \B \to
  G$ be a separable Fell bundle over $G$. Suppose that $G$ is
  transitive. Then the following are equivalent.
    \begin{enumerate}
    \item For some unit $u$, the full and reduced cross-section
      algebras $\cs(G(u),\B)$ and $\cs_{r}(G(u),\B)$ coincide.
    \item For every unit $u$, the full and reduced cross-section
      algebras $\cs(G(u),\B)$ and $\cs_{r}(G(u),\B)$ coincide.
    \item The full and reduced norms on $\sacgb$ are equal and hence
      $\cs_{r}(G,\B) = \cs(G,\B)$.
    \end{enumerate}
\end{lemma}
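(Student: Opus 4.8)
The plan is to use the standard fact that a transitive groupoid is equivalent to each of its isotropy groups, to lift this equivalence to the Fell bundle, and then to transport it to the reduced algebras. Fix a unit $u \in \go$ and set $G_u := s^{-1}(u) = \set{x \in G : s(x) = u}$; since $\go$ is Hausdorff this is a closed, hence second countable locally compact Hausdorff, subset of $G$. Transitivity of $G$ makes $r$ map $G_u$ onto $\go$, and a routine check shows that $G_u$ is a $(G, G(u))$-equivalence: $G$ acts freely and properly on the left by multiplication with $G \backslash G_u$ a single point, while $G(u)$ acts freely and properly on the right with $G_u / G(u) \cong \go$ via $r$. Because $\B$ is saturated, the restriction $\B\restr{G_u}$ inherits the structure of a $\B$--$(\B\restr{G(u)})$ equivalence of Fell bundles in the sense of \cite{simwil:xx11}: the fibre over $x \in G_u$ is $B(x)$, which is already an $A(r(x))$--$A(u)$-\ib.

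Granting this, the \emph{equivalence theorem} of \cite{simwil:xx11} produces a $\cs(G,\B)$--$\cs(G(u),\B)$ \ib, so that $\cs(G,\B)$ and $\cs(G(u),\B)$ are Morita equivalent; the reduced form of the same theorem produces a compatible $\cs_{r}(G,\B)$--$\cs_{r}(G(u),\B)$ \ib, with the quotient maps $\cs(G,\B) \to \cs_{r}(G,\B)$ and $\cs(G(u),\B) \to \cs_{r}(G(u),\B)$ intertwined by the induced morphism of bimodules. The point is that induction along $\B\restr{G_u}$ carries the induced representation $\Ind\pi$ of $\cs(G,\B)$ (for $\pi$ a faithful representation of $A = \sa_{0}(\go,\B)$), which defines the reduced norm, to the corresponding regular representation of $\cs(G(u),\B)$. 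Under the resulting Rieffel correspondence between the ideal lattices of $\cs(G,\B)$ and $\cs(G(u),\B)$ (see \cite{rw:morita}), the distinguished ideal $I_{\cs_{r}(G,\B)} = \ker\bigl(\cs(G,\B)\to\cs_{r}(G,\B)\bigr)$ therefore corresponds to $I_{\cs_{r}(G(u),\B)}$.

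Since an ideal vanishes if and only if its partner under the Rieffel correspondence vanishes, we conclude that $I_{\cs_{r}(G,\B)} = 0$ if and only if $I_{\cs_{r}(G(u),\B)} = 0$; equivalently, the full and reduced norms on $\sacgb$ agree if and only if $\cs(G(u),\B) = \cs_{r}(G(u),\B)$. As $u$ was an arbitrary unit, this single equivalence, applied for each $u$, yields all three implications: statement~(c) is equivalent to the coincidence $\cs(G(u),\B)=\cs_{r}(G(u),\B)$ holding for the particular witnessing unit of~(a), giving (a)$\Leftrightarrow$(c); it is equivalent to that coincidence holding for every $u$, giving (b)$\Leftrightarrow$(c); and (b)$\Rightarrow$(a) is immediate.

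The step I expect to be the main obstacle is the middle one: upgrading the Morita equivalence of the full algebras to a compatible equivalence of the \emph{reduced} algebras that matches the canonical ideals. The Morita equivalence of the full algebras is exactly the equivalence theorem, but the reduced statement requires showing that induction along $\B\restr{G_u}$ intertwines the regular representations on the two sides, so that $I_{\cs_{r}(G,\B)}$ and $I_{\cs_{r}(G(u),\B)}$ are genuinely Rieffel partners; this is where the explicit description of $\Ind\pi$ and the reduced norm recalled in Section~\ref{sec:fell-bundles} must be used. Verifying that $\B\restr{G_u}$ really satisfies the Fell-bundle equivalence axioms is routine but, as noted, relies essentially on the saturation of $\B$.
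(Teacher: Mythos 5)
Your proposal is correct and follows essentially the same route as the paper: both pass through the $(G,G(u))$-equivalence $G_u = s^{-1}(u)$, lift it to a Fell-bundle equivalence between $\B$ and $\B\restr{G(u)}$ (the paper cites \cite{ionwil:ms11}*{Theorem~1} for this), and then invoke the equivalence theorem for reduced Fell bundle algebras, \cite{simwil:xx11}*{Theorem~14}, which is precisely the statement you identify as the ``main obstacle'' --- that the kernels of the full-to-reduced quotient maps are Rieffel partners, so one vanishes if and only if the other does. The only difference is that you sketch the content of that cited theorem rather than quoting it.
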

\begin{proof}
  Fix $u\in\go$.  Then $G_{u}:=s^{-1}(u)$ is a
  $(G,G(u))$-equivalence, and as in \cite{ionwil:ms11}*{Theorem~1},
  $\E:=p^{-1}(G_{u})$ implements an equivalence between $\B$ and
  $p^{-1}(G(u))$.  Consequently, \cite{simwil:xx11}*{Theorem~14}
  implies that the natural surjection of $\cs(G,\B)$ onto
  $\cs_{r}(G,\B)$ is an isomorphism if and only if the kernel $I_{r}$
  of the natural map of $\cs(G(u),\B)$ onto $\cs_{r}(G(u),\B)$ is
  trivial. Since $u\in\go$ was arbitrary, the result follows.
\end{proof}

To finish off our proof of Theorem~\ref{thm:induced metric
  amenability}, we need the following special case of
\cite{ionwil:hjm11}*{Theorem~3.7}.
As above, let $p : \B \to
  G$ be a separable Fell bundle over $G$ with associated \cs-algebra
  $A=\sa_{0}(\go,\B)$.  Recall from
  \cite{ionwil:hjm11}*{Proposition~2.2} that $G$ acts on $\Prim A$
  which we identify with $\set{(u,P):\text{$u\in\go$ and $P\in\Prim
      A(u)$}}$.
Let $U$ be an open
  $G$-invariant subset of $\go$ with complement $F$. Then $\set{(u, P)
    \in\Prim A:
  u \in F}$ is a closed invariant subset of $\Prim(A)$,
  and corresponds to the $G$-invariant ideal $\set{a \in A : \text{$a(u) =
  0$ for all $u \in F$}}$ of $A$. By
  \cite{ionwil:hjm11}*{Proposition~3.3}, the corresponding bundle
  $\B_I$ is the one with fibres
  \[
  B_I(x) =
  \begin{cases}
    B(x) &\text{ if $x \in G(U)$}\\
    \{0\} &\text{ if $u \in G(F)$,}
  \end{cases}
  \]
  so we can identify it with the bundle $\B|_{G(U)}$ over
  $G(U)$. Moreover, $\B^I$ is the complementary bundle
  \[
  B^I(x) =
  \begin{cases}
    \{0\} &\text{ if $u \in G(U)$}\\
    B(x) &\text{ if $u \in G(F)$,}
  \end{cases}
  \]
  which we may identify with the bundle $\B|_{G(F)}$ over
  $G(F)$. Thus, as a special case of Theorem~3.7 of
  \cite{ionwil:hjm11}, we obtain the following result.
  \begin{lemma}
    \label{lem-special-ionwil}
    Let $G$ be a second-countable locally compact Hausdorff groupoid
  with Haar system $\sset{\lambda^{u}}_{u\in\go}$, and let $p : \B \to
  G$ be a separable Fell bundle over $G$.   Suppose that $U$ is a
  $G$-invariant open subset of $\go$ with complement $F$.  There is
  a short exact sequence of \cs-algebras
  \begin{equation*}
    \xymatrix{0\ar[r]&\cs(G(U),\B)\ar[r]^-{\iota}&\cs(G,\B)\ar[r]^-{q}&
\cs(G(F),\B)\ar[r]&0,}
  \end{equation*}
where $\iota$ is induced by inclusion and $q$ by restriction on sections.
  \end{lemma}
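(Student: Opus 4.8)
The plan is to apply Theorem~3.7 of \cite{ionwil:hjm11} to the distinguished $G$-invariant ideal of $A=\sa_{0}(\go,\B)$ determined by $F$, and then to translate the resulting exact sequence through the bundle identifications already recorded in the discussion preceding the lemma. Since exactness is handed to us directly by the cited theorem, the only real content is matching up the ideals, bundles and maps.

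First I would set $I:=\set{a\in A:\text{$a(u)=0$ for all $u\in F$}}$. Because $U$ is open and $G$-invariant, its complement $F$ is closed and $G$-invariant, so by \cite{ionwil:hjm11}*{Proposition~2.2} the set $\set{(u,P)\in\Prim A:u\in F}$ is a closed $G$-invariant subset of $\Prim A$, and $I$ is precisely the corresponding $G$-invariant ideal. Theorem~3.7 of \cite{ionwil:hjm11} then supplies a short exact sequence
\[
0\to\cs(G,\B_{I})\xrightarrow{\ \iota\ }\cs(G,\B)\xrightarrow{\ q\ }\cs(G,\B^{I})\to 0,
\]
where $\B_{I}$ is the subbundle attached to $I$ and $\B^{I}$ is its complement.

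Next I would invoke the explicit descriptions of $\B_{I}$ and $\B^{I}$ from \cite{ionwil:hjm11}*{Proposition~3.3}, which are exactly the ones displayed just above the lemma: $\B_{I}$ has fibre $B(x)$ for $x\in G(U)$ and $\set{0}$ elsewhere, so it is the bundle $\B\restr{G(U)}$ over the open subgroupoid $G(U)$, while $\B^{I}$ has fibre $B(x)$ for $x\in G(F)$ and $\set{0}$ elsewhere, so it is $\B\restr{G(F)}$ over the closed subgroupoid $G(F)$. Under the notational conventions fixed in Section~\ref{sec:fell-bundles} these identifications read $\cs(G,\B_{I})\cong\cs(G(U),\B)$ and $\cs(G,\B^{I})\cong\cs(G(F),\B)$, and substituting them into the sequence above yields the one asserted in the lemma.

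Finally I would check that the abstract maps $\iota$ and $q$ coincide with the concrete ones named in the statement. A section of $\B_{I}$ is just a section of $\B$ supported on $G(U)$, so under the identification the map $\iota$ is extension by zero; that is, it is induced by inclusion of sections. Dually, $q$ sends a section to its class modulo sections vanishing on $G(F)$, which at the level of the dense subalgebra $\sacgb$ is exactly restriction of sections to $G(F)$. I expect this bookkeeping---verifying that inclusion is isometric onto the ideal, that restriction is norm-decreasing, and that both descriptions agree on the dense subalgebras $\sa_{c}(G(U),\B)$, $\sacgb$, $\sa_{c}(G(F),\B)$ and extend continuously---to be the only work required, since everything about exactness is inherited verbatim from \cite{ionwil:hjm11}*{Theorem~3.7}.
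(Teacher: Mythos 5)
Your proposal is correct and follows essentially the same route as the paper: the authors likewise obtain the lemma as a direct specialization of \cite{ionwil:hjm11}*{Theorem~3.7} applied to the $G$-invariant ideal $\set{a\in A:\text{$a(u)=0$ for all $u\in F$}}$, using \cite{ionwil:hjm11}*{Proposition~3.3} to identify $\B_I$ with $\B\restr{G(U)}$ and $\B^I$ with $\B\restr{G(F)}$. The only difference is cosmetic: the paper carries out this identification in the discussion preceding the lemma rather than in a displayed proof, and leaves the matching of $\iota$ and $q$ with inclusion and restriction implicit, whereas you spell it out.
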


As an application of Lemma~\ref{lem-special-ionwil}, recall\footnote{A
  proof can be constructed along the lines of
\cite{muhwil:dm08}*{Proposition~4.2}.} that there is a nondegenerate
map $M:C_{0}(\go)\to
M(\cs(G,\B))$ given on sections by
\begin{equation*}
  M(\phi)f(x)=\phi\bigl(r(x)\bigr)f(x).
\end{equation*}
Suppose that the orbit space $\gugo$ is Hausdorff. Then we may
identify $C_{0}(\gugo)$ with the subalgebra of $C_b(\go)$ consisting
of functions which are constant on orbits and vanish at infinity on
the orbit space. We extend $M$ to $C_{b}(\go)$ and restrict to
$C_{0}(\gugo)$ to obtain a nondegenerate map of $C_{0}(\gugo)$ into
the center of $M(\cs(G,\B))$, making $\cs(G,\B)$ into a
$C_{0}(\gugo)$-algebra. As usual, if $u\in\go$, we let $[u]$ be the
corresponding orbit in $\gugo$.
\begin{cor}
  \label{cor-hausdorff-go}
  Let $G$ be a second-countable locally compact Hausdorff groupoid
  with Haar system $\sset{\lambda^{u}}_{u\in\go}$, and let $p : \B \to
  G$ be a separable Fell bundle over~$G$.  If $\gugo$ is Hausdorff,
  then $\cs(G,\B)$ is a $C_{0}(\gugo)$-algebra with fibres
  $\cs(G,\B)([u])\cong \cs(G([u]),\B)$.
\end{cor}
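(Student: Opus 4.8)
The plan is to take the $C_0(\gugo)$-algebra structure as already given by the map $M$ constructed in the paragraph preceding the statement, and then to identify each fibre by recognising its defining ideal as the kernel of a restriction map governed by Lemma~\ref{lem-special-ionwil}. Recall that the fibre of a $C_0(X)$-algebra $A$ over a point $x$ is the quotient $A/I_x$, where $I_x$ is the closed ideal generated by $\set{M(\phi)a : \phi\in C_0(X),\ \phi(x)=0,\ a\in A}$. Thus, granting that $M$ makes $\cs(G,\B)$ a $C_0(\gugo)$-algebra (which is exactly the content of the discussion preceding the corollary), the whole task reduces to identifying the ideal $I_{[u]}$ concretely.

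First I would fix $u\in\go$ and set $F:=\set{v\in\go:[v]=[u]}$, the orbit of $u$ viewed as a subset of $\go$. Since $\gugo$ is Hausdorff it is $T_1$, so the point $[u]$ is closed and hence $F$ is a closed $G$-invariant subset of $\go$; its complement $U:=\go\setminus F$ is then open and $G$-invariant. Because $U$ is invariant, $r(x)\in U$ forces $s(x)$ into the same orbit and hence into $U$, so $G(U)=r^{-1}(U)=s^{-1}(U)$; likewise $G(F)=r^{-1}(F)$ and $G\setminus G(F)=G(U)$. Noting that $G(F)=G([u])$, Lemma~\ref{lem-special-ionwil} supplies a short exact sequence in which $\iota$ embeds $\cs(G(U),\B)$ as a closed ideal of $\cs(G,\B)$ and the restriction map $q$ identifies the quotient with $\cs(G([u]),\B)$.

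The heart of the argument is to show that $\iota\bigl(\cs(G(U),\B)\bigr)=I_{[u]}$; granting this, the fibre $\cs(G,\B)([u])=\cs(G,\B)/I_{[u]}$ is identified with $\cs(G([u]),\B)$ via $q$, which is precisely the assertion. For the inclusion $I_{[u]}\subseteq\iota(\cs(G(U),\B))$, I would take $\phi\in C_0(\gugo)$ with $\phi([u])=0$ and $f\in\sacgb$; identifying $\phi$ with the orbit-constant function $\phi\circ p$ on $\go$, the section $M(\phi)f(x)=\phi([r(x)])f(x)$ vanishes whenever $r(x)\in F$, hence is supported on $G\setminus G(F)=G(U)$ and so lies in $\iota(\cs(G(U),\B))$; since such elements generate $I_{[u]}$ and $\iota(\cs(G(U),\B))$ is closed, the inclusion follows. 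For the reverse inclusion it is enough to catch a dense subset, so I would take $f\in\sa_{c}(G(U),\B)$: its support is compact in $G(U)$, so $K:=p\bigl(r(\supp f)\bigr)$ is a compact subset of $\gugo$ avoiding $[u]$, and local compactness of $\gugo$ yields $\phi\in C_0(\gugo)$ with $\phi\equiv 1$ on $K$ and $\phi([u])=0$. Then $M(\phi)f=f$, so $f\in I_{[u]}$, and density gives $\iota(\cs(G(U),\B))\subseteq I_{[u]}$.

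The main obstacle I anticipate is the bookkeeping in this last identification: keeping straight the three guises of an orbit (a point of $\gugo$, the invariant set $F\subseteq\go$, and the subgroupoid $G(F)\subseteq G$), checking that the $C_0(\gugo)$-action really is supported over $r^{-1}$ of the relevant set, and ensuring that $\iota$ identifies $\cs(G(U),\B)$ with an honest closed ideal so that the quotient from the short exact sequence agrees with the quotient defining the fibre. The genuinely new input is Lemma~\ref{lem-special-ionwil}; everything else is the standard $C_0(X)$-algebra machinery applied to $I_{[u]}$.
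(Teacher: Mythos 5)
Your argument is correct and is essentially the paper's own proof, just with the identification of the fibre-defining ideal $I_{[u]}$ with $\iota\bigl(\cs(G(U),\B)\bigr)$ written out in detail (the paper asserts this identification in one line, citing Lemma~\ref{lem-special-ionwil}). One tiny imprecision: $M(\phi)f$ need not literally have compact support inside $G(U)$ (the closure of its nonvanishing set can meet $G(F)$), but since it restricts to zero on $G(F)$ it lies in $\ker q=\iota\bigl(\cs(G(U),\B)\bigr)$ by the exactness of the sequence in Lemma~\ref{lem-special-ionwil}, so the inclusion you want still holds.
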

\begin{proof}
  Recall that $\cs(G,\B)([u])$ is the quotient of $\cs(G,\B)$ by the
  ideal $J_{[u]}=\overline{\operatorname{span}}\set{\phi\cdot
    a:\text{$\phi\in C_{0}(\gugo)$, $\phi([u])=0$ and
      $a\in\cs(G,\B)$}}$.   Using Lemma~\ref{lem-special-ionwil}, we
  can identify $J_{[u]}$ with  $\cs(G(U),\B)$, where
  $U=\go\setminus [u]$, and $\cs(G,\B)/J_{[u]}$ with $\cs(G([u]),\B)$
  as claimed.
\end{proof}

\begin{proof}[Proof of Theorem~\ref{thm:induced metric amenability}]
  Fix in irreducible representation $\pi$ of $\cs(G;\B)$ and an
  element $f \in \sacgb$. It suffices to show that $\|\pi(f)\|
  \le \|f\|_{\cs_{r}(G;\B)}$.

  By \cite{ram:jfa90}*{Theorem~2.1}, the orbit space $\gugo$ is
  locally Hausdorff and every orbit $[u]$ is locally closed in
  $\go$. Since $\gugo$ is second countable,
  \cite{wil:crossed}*{Lemma~6.3} implies that there is a countable
  ordinal $\gamma$ and a nested open cover $\set{U_n : 0\le n\le
    \gamma }$ of $\gugo$ such that $U_{0}=\emptyset$,
  $U_{\gamma}=\gugo$ and $U_{n+1}\setminus U_{n}$ is Hausdorff (and
  dense) in $(\gugo)\setminus U_{n}$.  For $n \le \gamma$, let $V_n :=
  \set{u \in \go : [u] \in U_n}$. Then each $V_n$ is an open invariant
  subset of $\go$. Using Lemma~\ref{lem-special-ionwil}, we can
  identify $\cs(G(V_{n}),\B)$ with an ideal in $\cs(G,\B)$.  In fact,
  $\set{\cs(G(V_{n}),\B)}_{n\le\gamma}$ is a composition series of
  ideals in $\cs(G,\B)$. By \cite{wil:crossed}*{Lemma~8.13}, there
  exists $0<n \le \gamma$ such that $\pi$ lives on the subquotient
  $\cs(G(V_n), \B)/\cs(G(V_{n-1}), \B)$; that is, $\pi$
  is the canonical lift $\bar \rho$ of an irreducible representation
  $\rho$ of the ideal $\cs(G(V_{n}),\B)$ such that $\ker \rho\supset
  \cs(G(V_{n-1}),\B)$.  Lemma~\ref{lem-special-ionwil} implies
  that $\cs(G(V_n), \B)/\cs(G(V_{n-1}), \B) \cong \cs(G(V_{n}\setminus
  V_{n-1}),\B)$. By construction, $G(V_{n}\setminus V_{n-1})$ has Hausdorff orbit
  space $U_{n}\setminus U_{n-1}$.  Hence $\cs(G(V_{n}\setminus
  V_{n-1}),\B)$ is a $C_{0}(U_{n}\setminus U_{n-1})$-algebra and
  $\rho$ factors through a fibre $\cs(G(V_{n}\setminus
  V_{n-1}),\B)([u])\cong \cs(G([u]),\B)$ for some $u\in V_{n}$ by
  \cite{wil:crossed}*{Proposition~C.5}.  Since, by assumption,
  $\cs(G[u]),\B)= \cs_{r}(G([u]),\B)$, we have $\ker(\Ind \pi_{[u]})
  \subset \ker\rho$ where $\pi_{[u]}$ factors through a faithful
  representation of the quotient $A_{V_{n}}([u])$ of the \cs-algebra
  $A(V_{n})$ of $\cs(G(V_{n}),\B)$ corresponding to the closed set
  $[u]\subset V_{n}$.  (Note that $A(V_{n})$ is the ideal of $A$
  corresponding to $V_{n}\subset \go$.)  The kernel of $\pi=\bar\rho$ is
    then contained in the kernel of the canonical lift of
    $\Ind\pi_{[u]}$ to $\cs(G,\B)$.  It is not hard to check that the
  canonical lift of $\Ind\pi_{[u]}$ to $\cs(G,\B)$ is $\Ind
  \bar\pi_{[u]}$ where $\bar \pi_{[u]}$ is the canonical lift of
  $\pi_{[u]}$ to $A$. Hence $\|\pi(f)\|=\|\bar\rho(f)\|\le
  \|\Ind\bar\pi_{[u]}(f)\|
\le \|f\|_{\cs_{r}(G,\B)}$.
\end{proof}

\appendix

\section{Nondegenerate Borel \texorpdfstring{$*$}{*}-functors}
\label{sec:borel--functors}

Let $p:\B\to G$ be a Fell bundle over a second-countable locally
compact Hausdorff groupoid, and let $\go*\HH$ be a Borel Hilbert
bundle. A \emph{Borel $*$-functor} $\hat\pi$ from $\B$ to
$\End(\go*\HH)$ is a map
\[
\hat\pi : b \mapsto \bigl(r(b), \pi(b), s(b)\bigr)
\]
such that $\pi(b) \in B\bigl(\H(s(b)), \H(r(b))\bigr)$ for all $b$ and
such that $\pi$ respects adjoints and the partial linear and
multiplicative structure of $\B$ (see
\cite{muhwil:dm08}*{Definition~4.5}). Following
\cite{muhwil:dm08}*{\S4}, a strict representation of $\B$ is a triple
$(\mu,\go*\HH,\hat\pi)$ consisting of a quasi-invariant measure $\mu$
on $\go$, a Borel Hilbert bundle $\go*\HH$ and a Borel $*$-functor
$\hat\pi$. It is common practice to use $\hat \pi$ and $\pi$
interchangeably, and we will drop the caret henceforth. A strict
representation determines a bounded representation via integration
(see \cite{muhwil:dm08}*{Proposition~4.10}); indeed, a Borel
$*$-functor defined on $p^{-1}(G\restr F)$ for any $\mu$-conull set
$F\subset \go$ is sufficient. Nevertheless, it is convenient to have
$\pi$ defined everywhere.

The purpose of this section is to point out that the disintegration
theorem \cite{muhwil:dm08}*{Theorem~4.13} for Fell bundles can be
strengthened to assert that $\pi$ can be taken to be nondegenerate as
defined in \S\ref{sec:disint-theor-revis}. At the same time, we
correct an error in the construction of $\pi$ in \cite{muhwil:dm08}.

In the proof of \cite{muhwil:dm08}*{Theorem~4.13}, starting from a
pre-representation $L$ of $\B$ on a dense subspace $\H_0$ of a Hilbert
space $H$, the authors showed that for any orthonormal
basis $\{\zeta_i : i \in \mathbb{N}\}$ for
$\operatorname{span}\{L(f)\xi : f \in \sacgb, \xi \in \H_0\}$, setting
$\H'_{00} := \operatorname{span}\{\zeta_i : i \in \mathbb{N}\}$, there
is a saturated Borel $\mu$-conull set $F\subset \go$ and a Borel
Hilbert bundle $F*\HH$ whose fibres $\H(u)$ are Hilbert-space
completions of $\sa_{c}(G^{u},\B)\atensor\H'_{00}$ (see
\cite{muhwil:dm08}*{Lemma~5.18} and
\cite{muhwil:dm08}*{Lemma~5.20}). For $f \in \sacgb$ and $h \in
\H'_{00}$, the class of $f\tensor h$ in $\H(u)$ is denoted by
$f\tensor_{u}h$.  The space $\H(u)$ may be trivial for some
$u$. For each $z \in G\restr F$, $b\in B(z)$ and $f \in
  \sacgb$, let $\check\pi(b)f$ denote a section satisfying
\begin{equation*}
  \check\pi(b)f(x)=\Delta(z)^{\half} b f(z^{-1}x)\quad\text{for $x\in G^{r(b)}$.}
\end{equation*}
The Borel $*$-functor in the disintegration of $L$
constructed in \cite{muhwil:dm08} is defined by
\begin{equation*}
  \pi(b)(f\tensor_{s(b)}\zeta_{i})=\check\pi(b)f\tensor_{r(b)}\zeta_{i}.
\end{equation*}
Since $F$ is saturated, $G$ is the disjoint union of $G\restr F$ and
$G\restr {\go\setminus F}$.  Since the latter is $\nu$-null, we can
extend $\pi$ to all of $G$ by defining it as we please on
$p^{-1}(G\restr{\go\setminus F})$, and this will not affect the
integrated representation. To ensure that $\pi$ is still a genuine
Borel $*$-functor, one sets $\H(u) := \{0\}$ for each $u\notin F$ and
$\pi(b) := 0$ for $b\notin p^{-1}(G\restr F)$. (In \cite{muhwil:dm08},
the authors mistakenly let $(\go\setminus F)*\HH$ be a constant field
and let $\pi(b)$ be the identity operator, but such a $\pi$ is not a
$*$-functor since as it doesn't preserve the partial linear
structure.) We claim that $\pi$ is nondegenerate in the sense
that~\eqref{eq:2} holds for all $z \in G$. It holds trivially for $z
\not\in G\restr F$, so fix $z\in G\restr F$, and let $u := r(z)$. We
start with two observations.
\begin{enumerate}[(A)]
\item If $f_{i}\to f$ in the inductive limit topology on
  $\sa_{c}(G^{u},\B)$ then $f_{i}\tensor_{u}\zeta_{k}\to
  f\tensor_{u}\zeta_{k}$ in $\H(u)$. To see this, observe that
  equation~(5.19) of \cite{muhwil:dm08} is bounded by
  $K\|f\|_{\infty}\|g\|_{\infty}$ where $K$ is constant depending only
  on $\supp f$ and $\supp g$.
\item If $\set{e_{i}}$ is an approximate identity in $A(u)$, and, for
  each $i$, $e_{i}g$ represents any section in $\sa_{c}(G,\B)$ such
  that $(e_{i}g)(x)=e_{i}g(x)$ for $x\in G^{u}$, then $e_{i}g\to g$ in
  the inductive limit topology on $\sa_{c}(G^{u},\B)$. This follows
  from a compactness argument using that $A(u)$ acts nondegenerately
  on $B(x)$.
\end{enumerate}

By~(B), to establish~\eqref{eq:2} for $z$, it suffices to see that
each $e_{i}g\tensor_{r(z)}\zeta_{k}$ belongs to
$\overline{\pi(B(z)\H(s(z))}$. Fix $b_{1},\dots b_{n}\in B(z)$ such
that
\begin{equation*}
  \sum_{j}b_{j}b_{j}^{*}\sim e_{i}.
\end{equation*}
Then by~(A), we have
\begin{equation*}
  \sum_{j}\pi(b_{j})(\check\pi(b_{j}^{*})g\tensor \zeta_{k})\sim
  e_{i}g\tensor \zeta_{k},
\end{equation*}
and this suffices.

\begin{remark}
  \label{rem-notneeded}
  Just as $*$-functors are automatically bounded (see
  \cite{muhwil:dm08}*{Remark~4.6}), there is a sense in which the
  Borel $*$-functor appearing in any strict representation
  $(\mu,\go*\HH,\pi)$ is essentially nondegenerate.  We claim that
  \begin{equation}
    \label{eq:3}
    \overline{\pi\bigl(B(x)\bigr)\H\bigl((s(x)\bigr)}=
    \overline{\pi\bigl(A(r(x)\bigr)
      \H\bigl((r(x)\bigr)} \quad\text{for all $x\in G$.}
  \end{equation}
  The right-hand side of~\eqref{eq:3} is the essential space of the
  representation $\pi_{r(x)}$ of $A\bigl(r(x)\bigr)$ determined by
  $\pi$, so~\eqref{eq:2} holds whenever $\pi_{r(x)}$ is nondegenerate.
  So if the representation $\pi_{\mu}$ of $A=\sa_{0}(\go,\B)$
  determined by $\pi$ is nondegenerate, then $\pi_{u}$ is
  nondegenerate for $\mu$-almost all $u$, so~\eqref{eq:2} holds on a
  $\nu$-conull subset of $G$ (where, as usual, $\nu =
  \mu\circ\lambda$).

  To establish~\eqref{eq:3}, we use that $\B$ is saturated: one the
  one hand,
  \begin{equation*}
    \overline{\pi\bigl(B(x)\bigr)\H\bigl(s(x)\bigr)} =
    \overline{\pi\bigl(A(r(x)\bigr)\bigr) \pi\bigl(B(x)\bigr)
      \H\bigl(s(x)\bigr)} \subset
    \overline{\pi\bigl(A(r(x)\bigr)\bigr)\H\bigl(r(x)\bigr)},
  \end{equation*}
  while on the other hand,
  \begin{equation*}
    \overline{\pi\bigl(B(x)\bigr) \H\bigl(s(x)\bigr) } \supset
    \overline{\pi\bigl(B(x)
      \bigr)\pi\bigl(B(x^{*})\bigr)\H\bigl(r(x)\bigr)}
    =\overline{\pi\bigl(A\bigl(r(x) \bigr)\bigr)\H\bigl(r(x)\bigr)}.
  \end{equation*}
\end{remark}


\def\noopsort#1{}\def\cprime{$'$} \def\sp{^}
\begin{bibdiv}
\begin{biblist}

\bib{anaren:amenable00}{book}{
      author={Anantharaman-Delaroche, Claire},
      author={Renault, Jean},
       title={Amenable groupoids},
      series={Monographies de L'Enseignement Math\'ematique [Monographs of
  L'Enseignement Math\'ematique]},
   publisher={L'Enseignement Math\'ematique},
     address={Geneva},
        date={2000},
      volume={36},
        ISBN={2-940264-01-5},
        note={With a foreword by Georges Skandalis and Appendix B by E.
  Germain},
      review={\MR{MR1799683 (2001m:22005)}},
}

\bib{dkps:iumj11}{article}{
      author={Deaconu, Valentin},
      author={Kumjian, Alex},
      author={Pask, David},
      author={Sims, Aidan},
       title={Graphs of {$C^*$}-correspondences and {F}ell bundles},
        date={2011},
     journal={Indiana U. Math. J.},
      volume={59},
       pages={1687\ndash 1735},
}

\bib{exe:jfram97}{article}{
      author={Exel, Ruy},
       title={Amenability for {F}ell bundles},
        date={1997},
        ISSN={0075-4102},
     journal={J. reine angew. Math.},
      volume={492},
       pages={41\ndash 73},
      review={\MR{MR1488064 (99a:46131)}},
}

\bib{hah:tams78}{article}{
      author={Hahn, Peter},
       title={Haar measure for measure groupoids},
        date={1978},
        ISSN={0002-9947},
     journal={Trans. Amer. Math. Soc.},
      volume={242},
       pages={1\ndash 33},
      review={\MR{MR496796 (82a:28012)}},
}

\bib{ionwil:ms11}{article}{
      author={Ionescu, Marius},
      author={Williams, Dana~P.},
       title={A classic {M}orita equivalence result for {F}ell bundle
  {$C^\ast$}-algebras},
        date={2011},
        ISSN={0025-5521},
     journal={Math. Scand.},
      volume={108},
      number={2},
       pages={251\ndash 263},
      review={\MR{2805605}},
}

\bib{ionwil:hjm11}{article}{
      author={Ionescu, Marius},
      author={Williams, Dana~P.},
       title={Remarks on the ideal structure of {F}ell bundle
  {$C^*$}-algebras},
        date={2012},
     journal={Houston J. Math.},
       pages={in press},
        note={(arXiv:math.OA.0912.1124)},
}

\bib{kumpas:nyjm00}{article}{
      author={Kumjian, Alex},
      author={Pask, David},
       title={Higher rank graph {$C^\ast$}-algebras},
        date={2000},
        ISSN={1076-9803},
     journal={New York J. Math.},
      volume={6},
       pages={1\ndash 20},
         url={http://nyjm.albany.edu:8000/j/2000/6_1.html},
      review={\MR{1745529 (2001b:46102)}},
}

\bib{moutu:xx11}{unpublished}{
      author={Moutuou, El-Ka\"ioum~M.},
      author={Tu, Jean-Louis},
       title={Equivalence of fell systems and their reduced {$C^*$}-algebras},
     address={preprint},
        date={2011},
        note={(arXiv:math.OA.1101.1235v1)},
}

\bib{muh:cbms}{techreport}{
      author={Muhly, Paul~S.},
       title={Coordinates in operator algebra},
 institution={CMBS Conference Lecture Notes (Texas Christian University 1990)},
        date={1999},
        note={In continuous preparation},
}

\bib{muhwil:dm08}{article}{
      author={Muhly, Paul~S.},
      author={Williams, Dana~P.},
       title={Equivalence and disintegration theorems for {F}ell bundles and
  their {$C\sp *$}-algebras},
        date={2008},
        ISSN={0012-3862},
     journal={Dissertationes Math. (Rozprawy Mat.)},
      volume={456},
       pages={1\ndash 57},
      review={\MR{MR2446021}},
}

\bib{muhwil:nyjm08}{book}{
      author={Muhly, Paul~S.},
      author={Williams, Dana~P.},
       title={Renault's equivalence theorem for groupoid crossed products},
      series={NYJM Monographs},
   publisher={State University of New York University at Albany},
     address={Albany, NY},
        date={2008},
      volume={3},
        note={Available at http://nyjm.albany.edu:8000/m/2008/3.htm},
}

\bib{rw:morita}{book}{
      author={Raeburn, Iain},
      author={Williams, Dana~P.},
       title={Morita equivalence and continuous-trace {$C^*$}-algebras},
      series={Mathematical Surveys and Monographs},
   publisher={American Mathematical Society},
     address={Providence, RI},
        date={1998},
      volume={60},
        ISBN={0-8218-0860-5},
      review={\MR{2000c:46108}},
}

\bib{ram:jfa90}{article}{
      author={Ramsay, Arlan},
       title={The {M}ackey-{G}limm dichotomy for foliations and other {P}olish
  groupoids},
        date={1990},
        ISSN={0022-1236},
     journal={J. Funct. Anal.},
      volume={94},
      number={2},
       pages={358\ndash 374},
      review={\MR{MR1081649 (93a:46124)}},
}

\bib{ren:groupoid}{book}{
      author={Renault, Jean},
       title={A groupoid approach to {\cs}-algebras},
      series={Lecture Notes in Mathematics},
   publisher={Springer-Verlag},
     address={New York},
        date={1980},
      volume={793},
}

\bib{ren:jot91}{article}{
      author={Renault, Jean},
       title={The ideal structure of groupoid crossed product \cs-algebras},
        date={1991},
     journal={J. Operator Theory},
      volume={25},
       pages={3\ndash 36},
}

\bib{simwil:xx11}{unpublished}{
      author={Sims, Aidan},
      author={Williams, Dana~P.},
       title={An equivalence theorem for reduced {F}ell bundle
  {$C^*$}-algebras},
     address={preprint},
        date={2011},
        note={(arXiv:math.OA.1111.5753v1)},
}

\bib{simwil:jot11}{article}{
      author={Sims, Aidan},
      author={Williams, Dana~P.},
       title={Renault's equivalence theorem for reduced groupoid
  {$C^*$}-algebras},
        date={2012},
     journal={J. Operator Theory},
       pages={in press},
        note={(arXiv:math.OA.1002.3093)},
}

\bib{tak:jfa75}{article}{
      author={Takai, Hiroshi},
       title={On a duality for crossed products of $\cs$-algebras},
        date={1975},
     journal={J. Funct. Anal.},
      volume={19},
       pages={25\ndash 39},
}

\bib{wil:crossed}{book}{
      author={Williams, Dana~P.},
       title={Crossed products of {$C{\sp \ast}$}-algebras},
      series={Mathematical Surveys and Monographs},
   publisher={American Mathematical Society},
     address={Providence, RI},
        date={2007},
      volume={134},
        ISBN={978-0-8218-4242-3; 0-8218-4242-0},
      review={\MR{MR2288954 (2007m:46003)}},
}

\bib{zel:jmpa68}{article}{
      author={Zeller-Meier, Georges},
       title={Produits crois\'es d'une {$C\sp{\ast} $}-alg\`ebre par un groupe
  d'automorphismes},
        date={1968},
        ISSN={0021-7824},
     journal={J. Math. Pures Appl. (9)},
      volume={47},
       pages={101\ndash 239},
      review={\MR{MR0241994 (39 \#3329)}},
}

\end{biblist}
\end{bibdiv}

\end{document}